\renewenvironment{itemize}
  {\begin{list}{$\triangleright$}{%
  \setlength{\parskip}{0mm}
  \setlength{\topsep}{.1\baselineskip}
  \setlength{\rightmargin}{0mm}
  \setlength{\listparindent}{0mm}
  \setlength{\itemindent}{0mm}
  \setlength{\labelwidth}{3ex}
  \setlength{\itemsep}{.1\baselineskip}
  \setlength{\parsep}{.1\baselineskip}
  \setlength{\partopsep}{0mm}
  \setlength{\labelsep}{1ex}
  \setlength{\leftmargin}{\labelwidth+\labelsep}
  }}{%
\end{list}}
\newtheoremstyle{mio}
     {2\parskip}     
     {2\parskip}     
     {}
     {}
     {\bfseries}
     {}
     {1ex}
     {\llap{\thmnumber{#2}\hskip0.9ex}\thmname{#1}\thmnote{\bfseries{}#3}}
\newcounter{thm}
\theoremstyle{mio}
\newtheorem{theorem}[thm]{Theorem}\tcolorboxenvironment{theorem}{mythm}
\newtheorem{lemma}[thm]{Lemma}\tcolorboxenvironment{lemma}{mythm}
\newtheorem{fact}[thm]{Fact}\tcolorboxenvironment{fact}{mythm}
\newtheorem{definition}[thm]{Definition}\tcolorboxenvironment{definition}{mythm}
\newtheorem{notation}[thm]{Notation}\tcolorboxenvironment{notation}{mythm}
\newtheorem{remark}[thm]{Remark}\tcolorboxenvironment{remark}{mythm}
\providecommand{\proofNameStyle}{\bfseries}
\renewenvironment{proof}[1][\proofname]{\par
  \pushQED{\qed}%
  \normalfont
  \trivlist
  \item[\hskip\labelsep
        \proofNameStyle
    #1\@addpunct{.}]\ignorespaces
}{%
  \popQED\endtrivlist\@endpefalse
}
\renewcommand*{\emph}[1]{%
   \smash{\tikz[baseline]\node[rectangle, fill=teal!25, rounded corners, inner xsep=0.5ex, inner ysep=0.2ex, anchor=base, minimum height = 2.7ex]{\strut #1};}}
\author{Silvia Barbina}
\author{Riccardo Camerlo}
\author{Domenico Zambella}
\address[Silvia Barbina]{Sezione di Matematica, Universit\`{a} di Camerino}
\email[Silvia Barbina]{silvia.barbina@unicam.it}
\address[Riccardo Camerlo]{Dipartimento di Matematica, Universit\`{a} di Genova}\email[Riccardo Camerlo]{riccardo.camerlo@unige.it}
\address[Domenico Zambella]{Dipartimento di Matematica, Universit\`{a} di Torino}
\email[Domenico Zambella]{domenico.zambella@unito.it}
\thanks{Silvia Barbina was partially supported by PRIN2022 \textit{Models, sets and classifications}, prot.\@ 2022TECZJA.
Riccardo Camerlo was partially supported by the MUR excellence department project awarded to the Department of Mathematics of the University of Genoa, CUP D33C23001110001. 
Domenico Zambella was partially supported by PRIN2022 \textit{Logical methods in combinatorics}, prot.\@ 2022BXH4R5}
\subjclass[2020]{03C66, 03C45, 03C68}
\begin{document}
\vspace*{-1ex}
\title{Local stability in structures with a standard sort}
\maketitle
\raggedbottom

\begin{abstract}
  Recently, a classical approach to continuous structures has been proposed in \cite{clcl} and~\cite{Z} that extends the class of structures falling under the scope of~\cite{HI} or~\cite{BBHU}.
  These articles introduce the notion of \textit{structures with a standard sort.}
  We discuss local stability in this context.
  We examine three variants of the order property which are prima facie non equivalent.
  For each variant we show that sets externally definable by stable formulas are definable in some appropriate sense.
\end{abstract}

\def\medrel#1{\parbox{6ex}{\hfil $#1$}}
\def\ceq#1#2#3{\parbox[t]{13ex}{$\displaystyle #1$}\medrel{#2}{$\displaystyle #3$}}

\section{Structures with a standard sort}

Continuous logic~\cite{BBHU} has replaced Henson-Iovino logic~\cite{HI} as a formalism to study the model theory of continuous structures. 
One of the first articles on the subject, \cite{BU}, defines local stability within continuous logic.
As noted by Henson, this definition escapes~\cite{HI}'s approach.
This has subsequently been used as a major argument to advocate in favor of \cite{BBHU}'s over~\cite{HI}'s approach.

In this section we recall the basic properties of the \textit{structures with a standard sort\/} in \cite{clcl}, which offers an alternative framework to describe continuous structures using classical logic. 

There is more than one way to define stability in \cite{clcl}'s context.
In Sections~\ref{finitary}, \ref{nonfinitary} and~\ref{epsilonstable} we consider three variants and for each we prove a suitable version of the classical theorem that says that externally definable sets are definable. 
All three proofs follow the same line of reasoning~--~a finitary version of the classical argument as presented e.g. in~\cite{Z?}.

Let \emph{$S$\/} be a Hausdorff compact topological space.
We associate to $S$ a first order structure in a language \emph{${\EuScript L}_{\sf S}$\/} that has a relation symbol for each compact subset $C\subseteq S$ and a function symbol for each continuous function $f:S^n\to S$.
Depending on the context, $C$ and $f$ denote either the symbols of ${\EuScript L}_{\sf S}$ or their interpretation in the structure $S$.
Throughout these notes the letter $C$ always denotes a compact subset of $S$, or a tuple of such sets.

Finally, note that we could allow in ${\EuScript L}_{\sf S}$ relation symbols for all compact subsets of $S^n$, for any $n$.
But this would clutter the notation, therefore we prefer to leave the straightforward generalization to the reader.


We also fix an arbitrary first order language which we denote by \emph{${\EuScript L}_{\sf H}$\/} and call the language of the home sort.

\begin{definition}\label{def_0}
  Let \emph{${\EuScript L}$\/} be a two sorted language. 
  The two sorts are denoted by \emph{${\sf H}$} and \emph{${\sf S}$.} 
  The language ${\EuScript L}$ expands ${\EuScript L}_{\sf H}$ and ${\EuScript L}_{\sf S}$ with symbols of sort ${\sf H}^n\times{\sf S}^m\to {\sf S}$.
  An \emph{${\EuScript L}$-structure\/} is a structure of signature ${\EuScript L}$ that interprets these symbols with equicontinuous functions (i.e.\@ uniformly continuous w.r.t.\@ the variables in ${\sf H}$).

  For a given $S$ a \emph{standard structure\/} is a two-sorted ${\EuScript L}$-structure of the form $\langle M,S\rangle$, where $M$ is any structure of signature ${\EuScript L}_{\sf H}$.
  Standard structures are denoted by the domain of their home sort.
\end{definition}

We denote by ${\EuScript F}$ the set of ${\EuScript L}$-formulas constructed inductively from atomic formulas of the form (i) and (ii) below using the Boolean connectives $\wedge$, $\vee$; the quantifiers $\forall\raisebox{1.1ex}{\scaleto{\sf H}{.8ex}\kern-.2ex}$, $\exists\raisebox{1.1ex}{\scaleto{\sf H}{.8ex}\kern-.2ex}$ of sort ${\sf H}$; and the quantifiers $\forall\raisebox{1.1ex}{\scaleto{\sf S}{.8ex}\kern-.2ex}$, $\exists\raisebox{1.1ex}{\scaleto{\sf S}{.8ex}\kern-.2ex}$ of sort ${\sf S}$.

We write \emph{$x\in C$\/} for the predicate associated to a compact set $C$, and denote by $|\mbox{-}|$ the length of a tuple.

\begin{definition}\label{def_atomic}
  The atomic formulas of ${\EuScript F}$ are
  \begin{itemize}
  \item[i.] atomic and negated atomic formulas of ${\EuScript L}_{\sf H}$
  \item[ii.] those of the form $\tau\in C$, where $\tau$ is a term of sort ${\sf H}^n\times {\sf S}^m\to {\sf S}$, and $C$ a compact set.
  \end{itemize}
\end{definition}

We remark that a smaller fragment of ${\EuScript F}$ may be of interest in some specific contexts.
This fragment is obtained by excluding equalities and inequalities from (i).
The discussion below applies to this smaller fragment as well.

Let $M\subseteq N$ be standard structures.
We say that $M$ is an \emph{${\EuScript F}$-elementary\/} substructure of $N$ if the latter models all ${\EuScript F}(M)$-sentences that are true in $M$.

Let $x$ and $\xi$ be variables of sort ${\sf H}$, respectively ${\sf S}$.
A standard structure $M$ is \emph{${\EuScript F}$-saturated\/} if it realizes every type $p(x\,;\xi)\subseteq{\EuScript F}(A)$, for any $A\subseteq M$ of cardinality smaller than $|M|$, that is finitely consistent in $M$.
The following theorem is proved in \cite{clcl} for signatures that do not contain symbols of sort ${\sf H}^n\times{\sf S}^m\to {\sf S}$ with $n{\cdot}m>0$.
A similar framework has been independently introduced in \cite{CP}~--~only without quantifiers of sort {\sf H} and with an approximate notion of satisfaction.
The corresponding compactness theorem is proved there with a similar method.
The setting in \cite{CP} is a generalization of that used by Henson and Iovino for Banach spaces~\cite{HI}.
By the elimination of quantifiers of sort {\sf S}, proved in \cite{clcl}*{Proposition 3.6}, the two approaches are equivalent~--~up to approximations.

In \cite{Z} it is observed that, under the assumption of equicontinuity, the proof in \cite{clcl} extends to the case $n{\cdot}m> 0$.

\begin{theorem}[ (Compactness)]\label{thm_compactness}
  Every standard structure has an ${\EuScript F}$-saturated ${\EuScript F}$-elementary extension.
\end{theorem}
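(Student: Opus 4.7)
The plan is to adapt the classical ultrapower construction to the two-sorted setting where the standard sort $S$ must remain fixed. The main challenge is that although the home sort can be enlarged freely, the interpretation of sort ${\sf S}$ is prescribed; the crucial device is to collapse the ${\sf S}$-part of the ultrapower back to $S$ using the ultralimit, for which compactness of $S$ is exactly what is needed.

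First I would prove a satisfiability step: any set $\Sigma \subseteq {\EuScript F}(M)$ that is finitely consistent in $M$ is realized in some ${\EuScript F}$-elementary extension of $M$. Let $I$ be the set of pairs $(s, \bar a)$ where $s \subseteq \Sigma$ is finite and $\bar a$ is a tuple of $M$ realizing $s$; fix an ultrafilter $\mathcal{U}$ on $I$ containing $\{(s, \bar a) \in I : s_0 \subseteq s\}$ for every finite $s_0 \subseteq \Sigma$. Form the usual ${\EuScript L}_{\sf H}$-ultrapower $M_\ast = M^I/\mathcal{U}$. Since $S$ is Hausdorff compact, every $\mathcal{U}$-indexed family $(\xi_i)_{i \in I}$ in $S$ has a unique ultralimit $\lim_{\mathcal{U}} \xi_i \in S$, namely the unique $\xi$ with $\{i : \xi_i \in U\} \in \mathcal{U}$ for every open neighborhood $U$ of $\xi$. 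Define $N$ to be the standard structure whose home sort is $M_\ast$, whose standard sort is $S$, and which interprets each symbol $\tau$ of sort ${\sf H}^n \times {\sf S}^m \to {\sf S}$ by $\tau^N([\bar a_i]_{\mathcal{U}}, \bar\xi) = \lim_{\mathcal{U}} \tau^M(\bar a_i, \bar\xi)$.

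I would then verify by induction on ${\EuScript F}$-formulas that $N$ is an ${\EuScript F}$-elementary extension of $M$ and that $[\bar a_i]_{\mathcal{U}}$ realizes $\Sigma$. The atomic case $\tau \in C$ uses that $C$ is closed: if $\tau^M(\bar a_i, \bar\xi) \in C$ on a $\mathcal{U}$-large set of indices, the ultralimit lies in $C$. The connectives $\wedge, \vee$ are routine, as are ${\sf H}$-quantifiers by the standard ultrapower argument. For ${\sf S}$-quantifiers, the sort ${\sf S}$ is interpreted as the same set $S$ in $M$ and $N$, so the quantifier reduces to a quantifier over $S$ after applying the inductive hypothesis. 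The absence of negation for the atomic formulas in part (ii) of Definition~\ref{def_atomic} causes no trouble since only forward preservation is needed; but it is precisely for this reason that compactness of $C$ is essential.

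The main obstacle I expect is checking that $N$ is really an ${\EuScript L}$-structure, i.e.\ that $\tau^N$ is well-defined and equicontinuous in its ${\sf H}$-variables. Well-definedness amounts to the observation that $\mathcal{U}$-equivalent ${\sf H}$-sequences produce equal ultralimits, which is immediate. Equicontinuity in $N$ follows from that of $\tau^M$ with the same modulus: for sufficiently close $\bar\xi, \bar\xi' \in S^m$ the values $\tau^M(\bar a_i, \bar\xi)$ and $\tau^M(\bar a_i, \bar\xi')$ lie within a prescribed closed neighborhood of the diagonal uniformly in $i$, and this uniform bound passes to the ultralimits by closedness. Finally, to obtain an ${\EuScript F}$-saturated extension, I iterate the satisfiability step along a transfinite chain of ${\EuScript F}$-elementary extensions of cofinality exceeding $|M|$, using a bookkeeping enumeration to ensure every finitely consistent ${\EuScript F}$-type over a small parameter set is eventually realized, and taking unions at limit stages.
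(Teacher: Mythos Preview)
The paper does not give its own proof here; it cites \cite{clcl} and \cite{Z}, remarking that the argument of \cite{clcl} extends to mixed-sort function symbols under the equicontinuity hypothesis. Your overall strategy---ultrapower on the home sort, ultralimit collapse on the standard sort---is correct and in line with those references.

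There is, however, a genuine gap in the inductive step for the existential quantifier of sort ${\sf S}$. You write that the ${\sf S}$-quantifier case reduces to a quantifier over $S$ because the sort is unchanged, but this only handles the universal quantifier. For the existential, if $\{i : M \models \exists\eta\,\psi(\eta, \bar a_i, \bar\xi)\} \in \mathcal{U}$, the witnesses $\eta_i$ depend on $i$, and your induction hypothesis (which keeps the ${\sf S}$-parameters constant across $I$) does not yield $N \models \psi(\eta_0, [\bar a_i], \bar\xi)$ for any single $\eta_0 \in S$. The fix is to prove the stronger statement: if $\{i : M \models \varphi(\bar a_i, \bar\xi_i)\} \in \mathcal{U}$ then $N \models \varphi([\bar a_i], \lim_{\mathcal{U}}\bar\xi_i)$, allowing the ${\sf S}$-parameters to vary with the index. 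Then the existential step takes $\eta_0 = \lim_{\mathcal{U}}\eta_i$. But now the atomic case $\tau \in C$ genuinely requires equicontinuity: one must pass from $\tau^M(\bar a_i, \bar\xi_i) \in C$ on a $\mathcal{U}$-large set to $\lim_{\mathcal{U}}\tau^M(\bar a_i, \lim_{\mathcal{U}}\bar\xi_j) \in C$, and this works only because the modulus of continuity of $\tau^M(\bar a_i, -)$ is uniform in $i$. So equicontinuity is needed in the \L o\'s-type transfer itself, not merely to check that $N$ is an ${\EuScript L}$-structure; this is exactly the point the paper attributes to \cite{Z}. The same strengthened hypothesis also handles free ${\sf S}$-variables in the type $\Sigma$, which the definition of ${\EuScript F}$-saturation permits and which your sketch silently ignores.
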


We introduce a new sort ${\sf X}$ with the purpose of conveniently describing classes of ${\EuScript F}$-for\-mulas that only differ by the relation symbol $C\subseteq S$ they contain.
Let ${\EuScript F}_{\sf X}$ be defined as ${\EuScript F}$ but replacing (ii) in Definition~\ref{def_atomic} by\smallskip

\begin{itemize}
  \item[iii.] $\tau(x\,;\xi)\in X$, where $X$ is a variable of sort ${\sf X}$.
\end{itemize}

Formulas in ${\EuScript F}_{\sf X}$ are denoted by $\varphi(x\,;\xi;X)$, where $X=X_1,\dots,X_n$ is a tuple of variables of sort ${\sf X}$.
If $C=C _1,\dots,C_n$ is a tuple of compact subsets, then $\varphi(x\,;\xi;C)$, is a formula in ${\EuScript F}$.
This we call an \emph{instance\/} of $\varphi(x\,;\xi;X)$.
All formulas in ${\EuScript F}$ are instances of formulas in ${\EuScript F}_{\sf X}$.

\begin{definition}
  Let $\varphi\in{\EuScript F}_{\sf X}$ be a formula~--~possibly with some hidden free variables.
  The \emph{pseudonegation\/} of $\varphi\in{\EuScript F}_{\sf X}$ is the formula obtained by replacing the atomic formulas in  ${\EuScript L}_{\sf H}$ by their negation and the logical symbols $\wedge$, $\vee$, $\forall$, $\exists$
  by their respective dual $\vee$, $\wedge$, $\exists$, $\forall$.
  The atomic formulas of the form $\tau\in X$ remain unchanged.\smallskip
  
The pseudonegation of $\varphi$ is denoted by \emph{${\sim}\varphi$.}
Clearly, when no variable of sort ${\sf X}$ occurs in $\varphi$, we have ${\sim}\varphi\leftrightarrow\neg\varphi$.
\end{definition}

Note that, unlike $\neg\varphi$, the pseudonegation ${\sim}\varphi$ is again a formula in ${\EuScript F}_{\sf X}$.

It is sometimes convenient to work with infinite conjunctions of formulas.
In the following $\sigma(x\,;z\,;X)$ denotes an ${\EuScript F}_{\sf X}$-type.
We write ${\sim}\sigma(x\,;z\,;X)$ for the possibly infinite disjunction of the formulas ${\sim}\varphi(x\,;z\,;X)$ for $\varphi(x\,;z\,;X)\in\sigma$.

When $C$ and $\tilde C$ are tuples we read $C\subseteq C'$ and $\tilde C\cap C=\varnothing$ componentwise.
We say that $C'$ is a neighborhood of $C$ if every component of $C'$ contains an open set containing the corresponding component of $C$.

\begin{fact}\label{fact_trivial}
  The following hold for every $C\subseteq C'$ and $\tilde C\cap C=\varnothing$, every standard structure $M$, and every ${\EuScript F}_{\sf X}(M)$-type $\sigma(X)$\smallskip

  \ceq{\hfill M}{\models}{\phantom{\sim}\sigma(C)\ \rightarrow\phantom{\neg}\sigma(C')}\smallskip

  \ceq{\hfill M}{\models}{{\sim}\sigma(\tilde C)\ \rightarrow\neg\sigma(C).}\smallskip
  
\end{fact}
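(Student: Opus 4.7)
The plan is to reduce both statements to a single induction on the complexity of formulas in ${\EuScript F}_{\sf X}$, since a type $\sigma$ is just a collection of formulas and both implications pass over collections in the obvious way. First I would unpack what has to be shown for a single formula: for every $\varphi(x\,;\xi\,;X)\in{\EuScript F}_{\sf X}$, every standard structure $M$, and every choice of parameters $a$ (in the home sort) and $s$ (in the standard sort), one wants

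\begin{enumerate}
\item[(a)] $M\models\varphi(a\,;s\,;C)\ \rightarrow\ \varphi(a\,;s\,;C')$ whenever $C\subseteq C'$;
\item[(b)] $M\models{\sim}\varphi(a\,;s\,;\tilde C)\ \rightarrow\ \neg\varphi(a\,;s\,;C)$ whenever $\tilde C\cap C=\varnothing$.
\end{enumerate}

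Granting (a) and (b), the fact follows immediately: $M\models\sigma(C)$ means $M\models\varphi(C)$ for every $\varphi\in\sigma$, and (a) applied one formula at a time yields $M\models\sigma(C')$; for the second implication, ${\sim}\sigma(\tilde C)$ is by definition the disjunction over $\varphi\in\sigma$ of ${\sim}\varphi(\tilde C)$, so if it holds in $M$ then some ${\sim}\varphi(\tilde C)$ holds, and (b) gives $M\models\neg\varphi(C)$, hence $M\models\neg\sigma(C)$.

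The induction itself is then straightforward because ${\EuScript F}_{\sf X}$ is built without negation. For the atoms: the ${\EuScript L}_{\sf H}$-atoms and negated atoms carry no $X$, so (a) is trivial and (b) holds because pseudonegation already acts as ordinary negation on them; for an atom of the form $\tau\in X_i$, (a) is exactly $C_i\subseteq C_i'$ and (b) is exactly $\tilde C_i\cap C_i=\varnothing$, since the pseudonegation of $\tau\in X_i$ is again $\tau\in X_i$. The inductive step for $\wedge,\vee$ uses the monotonicity of these connectives with respect to implication, together with the De~Morgan identities ${\sim}(\varphi\wedge\psi)={\sim}\varphi\vee{\sim}\psi$ and dually for $\vee$. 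The steps for $\forall^{\sf H},\exists^{\sf H},\forall^{\sf S},\exists^{\sf S}$ are identical: the quantifiers are monotone, and pseudonegation swaps them in the expected way so that, in (b), ${\sim}(\forall x\,\varphi)=\exists x\,{\sim}\varphi$ produces a witness which, by induction, witnesses $\neg\forall x\,\varphi$, and symmetrically for $\exists$.

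There is essentially no obstacle: the whole point of the definition of ${\sim}\varphi$ is precisely to make this induction go through, and the role of the hypothesis $\tilde C\cap C=\varnothing$ appears only at the single atomic step $\tau\in X_i$, while the hypothesis $C\subseteq C'$ is used only at that same atomic step. The only minor bookkeeping is to be careful that the substitutions of the tuples $C, C', \tilde C$ for the sort-${\sf X}$ variables commute with the syntactic operation of pseudonegation, which is clear from the definition.
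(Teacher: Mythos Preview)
Your argument is correct and is exactly the routine induction the paper has in mind; the paper gives no proof for this fact (hence the label), so there is nothing to compare against beyond noting that your write-up is the intended verification spelled out in full.
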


\begin{fact}
  For every neighborhood $C'$ of $C$ there is $\tilde C$ disjoint from $C$ such that\smallskip
  
  \ceq{\hfill M}{\models}{\varphi(C)\ \rightarrow\ \neg\,{\sim}\varphi(\tilde C)\ \rightarrow\ \varphi(C')}\smallskip

  for every $M$, and every ${\EuScript F}_{\sf X}(M)$-formula $\varphi(X)$.
  Conversely, for every $\tilde C\cap C=\varnothing$ there is a neighboorhood $C'$ of $C$ such that

  \ceq{\hfill M}{\models}{\varphi(C)\ \rightarrow\ \varphi(C')\ \rightarrow\ \neg\,{\sim}\varphi(\tilde C)}\smallskip

  for every $M$, and every ${\EuScript F}_{\sf X}(M)$-formula $\varphi(X)$.
\end{fact}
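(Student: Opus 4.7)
The plan is to reduce both claims to Fact~\ref{fact_trivial} combined with compact Hausdorff separation in $S$; the only nonimmediate step is a short induction on formula complexity for the middle implication in the first half of the statement.

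For the first half, given a neighborhood $C'$ of $C$, I would pick for each index $i$ an open set $U_i$ with $C_i\subseteq U_i\subseteq C'_i$ (possible by the definition of neighborhood) and set $\tilde C_i:=S\setminus U_i$. Each $\tilde C_i$ is then closed (hence compact) and disjoint from $C_i$. The implication $\varphi(C)\rightarrow\neg\,{\sim}\varphi(\tilde C)$ is just the contrapositive of the second clause of Fact~\ref{fact_trivial}. The implication $\neg\,{\sim}\varphi(\tilde C)\rightarrow\varphi(C')$ I would prove by induction on $\varphi\in{\EuScript F}_{\sf X}$: for an ${\EuScript L}_{\sf H}$-atomic or negated atomic $\psi$, pseudonegation coincides with negation and neither side depends on $\tilde C$ or $C'$, so the implication collapses to $\neg\neg\psi\rightarrow\psi$; for the atomic formula $\tau\in X$, ${\sim}\varphi(\tilde C)$ is the unaltered formula $\tau\in\tilde C$, hence $\neg\,{\sim}\varphi(\tilde C)$ asserts $\tau\in U_i\subseteq C'_i$, which is exactly $\varphi(C')$; and the cases for $\wedge,\vee,\forall,\exists$ in either sort are automatic, since pseudonegation sends each connective to its classical dual and the inductive hypotheses combine in the expected way.

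For the converse, given $\tilde C\cap C=\varnothing$, I would apply compact Hausdorff separation in $S$ componentwise to produce disjoint open sets $U_i\supseteq C_i$ and $V_i\supseteq\tilde C_i$, and take $C'_i:=S\setminus V_i$. Then $C'_i$ is compact, contains the open set $U_i\supseteq C_i$ (so $C'$ is a neighborhood of $C$ in the sense above), and is disjoint from $\tilde C_i$. Both implications now fall out of Fact~\ref{fact_trivial}: $\varphi(C)\rightarrow\varphi(C')$ because $C\subseteq C'$, and $\varphi(C')\rightarrow\neg\,{\sim}\varphi(\tilde C)$ as the contrapositive of the second clause of Fact~\ref{fact_trivial} applied to the disjoint pair $\tilde C,C'$.

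The only nontrivial ingredient is the induction in the first direction, and it presents no real obstacle: the definition of pseudonegation has been arranged precisely so that every case lines up. The ${\sf H}$-atomic cases collapse to double negation; the ${\sf S}$-atomic case works because ${\sim}$ leaves $\tau\in X$ unchanged, which lets $S\setminus U_i$ serve as the separating compact set; and the remaining cases propagate through the built-in duality $\wedge\leftrightarrow\vee$, $\forall\leftrightarrow\exists$.
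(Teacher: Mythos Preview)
Your proof is correct and follows the same line as the paper's: you take $\tilde C=S\setminus O$ for an open $O$ with $C\subseteq O\subseteq C'$ in the first half, and a compact neighborhood $C'$ of $C$ disjoint from $\tilde C$ in the second half, exactly as the paper does, and the induction on the syntax of $\varphi$ is the same one the paper invokes. The only difference is presentational: you isolate which of the four implications already fall out of Fact~\ref{fact_trivial} and reserve the induction for the single implication $\neg\,{\sim}\varphi(\tilde C)\rightarrow\varphi(C')$ that genuinely needs it, whereas the paper bundles everything under ``induction on the syntax proves the general case.''
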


\begin{proof}
  When $\varphi(X)$ is the atomic formula $\tau\in X$, the first claim holds with $\tilde C=S\smallsetminus O$ where $O$ is any open set $C\subseteq O\subseteq C'$.
  The second claim holds when $C'$ is any neighborhood of $C$ disjoint from $\tilde C$.
  Induction on the syntax of $\varphi(X)$ proves the general case.
\end{proof}

The above fact has the following useful consequence.

\begin{fact}\label{fact_otto}
  The following are equivalent for every standard structure $M$ , every ${\EuScript F}_{\sf X}(M)$-type $\sigma(X)$, and every $C$\smallskip
  
    \ceq{1.\hfill M}{\models}{\phantom{\neg}\sigma(C)\ \leftarrow\ \bigwedge\Big\{\ \sigma(C')\ :\ C'\textrm{ neighborhood of }C\Big\} }\smallskip

    \ceq{2.\hfill M}{\models}{\neg\sigma(C)\ \rightarrow\ \bigvee\Big\{{\sim}\sigma(\tilde C)\ :\ \tilde C\cap C=\varnothing\Big\}.}\smallskip

  (The converse implications are trivial~--~therefore not displayed.)
\end{fact}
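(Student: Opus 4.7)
The plan is to derive both implications by contraposition, since in each case the contrapositive reduces the type-level statement to the formula-level content of the preceding fact. Before starting I would record that the two ``trivial'' converses the authors omit are immediate from Fact~\ref{fact_trivial}: the inclusion $C\subseteq C'$ gives $\sigma(C)\rightarrow\sigma(C')$ (converse of (1)), and $\tilde C\cap C=\varnothing$ gives ${\sim}\sigma(\tilde C)\rightarrow\neg\sigma(C)$ (converse of (2)). Hence only the displayed directions need work, and both become very short once one is willing to extract a single witness formula $\varphi\in\sigma$ at the appropriate moment.

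For (1)$\Rightarrow$(2), I would take the contrapositive of (1) applied to $M\models\neg\sigma(C)$: this produces a neighborhood $C'$ of $C$ with $M\not\models\sigma(C')$, so some $\varphi\in\sigma$ with $M\models\neg\varphi(C')$. The first half of the preceding fact, applied to this $C'$, supplies a $\tilde C$ disjoint from $C$ such that $\neg{\sim}\varphi(\tilde C)\rightarrow\varphi(C')$ holds in $M$; contraposing forces $M\models{\sim}\varphi(\tilde C)$, and since ${\sim}\sigma$ is by definition the disjunction over $\sigma$ of the individual pseudonegations, $M\models{\sim}\sigma(\tilde C)$, which is the desired disjunct of (2).

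For (2)$\Rightarrow$(1), I would again argue contrapositively: suppose $M\models\sigma(C')$ for every neighborhood $C'$ of $C$ but, towards a contradiction, $M\models\neg\sigma(C)$. Then (2) produces $\tilde C$ with $\tilde C\cap C=\varnothing$ and $M\models{\sim}\sigma(\tilde C)$, so $M\models{\sim}\varphi(\tilde C)$ for some $\varphi\in\sigma$. The second half of the preceding fact now yields a neighborhood $C'$ of $C$ such that $\varphi(C')\rightarrow\neg{\sim}\varphi(\tilde C)$ holds in $M$; contraposing gives $M\models\neg\varphi(C')$, contradicting $M\models\sigma(C')$.

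I do not foresee a genuine obstacle: the preceding fact was engineered precisely to interchange neighborhoods $C'$ of $C$ with disjoint compacta $\tilde C$ at the level of a single ${\EuScript F}_{\sf X}$-formula, and the only subtle point is to convert between infinitary and formula-level statements at the right step — namely, reading the failure of $\sigma(C')$ as the failure of some $\varphi(C')$, and reading $M\models{\sim}\sigma(\tilde C)$ as $M\models{\sim}\varphi(\tilde C)$ for some $\varphi\in\sigma$. Once that bookkeeping is done, the argument is essentially a one-line application of the fact in each direction.
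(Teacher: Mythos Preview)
Your proposal is correct and matches the paper's intended argument: the paper states Fact~\ref{fact_otto} as an immediate consequence of the preceding fact (``The above fact has the following useful consequence'') without spelling out a proof, and what you have written is exactly the natural unpacking of that remark. Your use of the two halves of the preceding fact in the two respective directions, together with the passage from $\neg\sigma(C')$ to $\neg\varphi(C')$ for some $\varphi\in\sigma$ and from ${\sim}\sigma(\tilde C)$ to ${\sim}\varphi(\tilde C)$ for some $\varphi\in\sigma$, is precisely the bookkeeping needed, and it goes through because the $\tilde C$ (resp.\ $C'$) produced by the preceding fact depends only on $C$ and $C'$ (resp.\ $C$ and $\tilde C$), not on the formula $\varphi$.
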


\begin{fact}\label{fact_saturation}
  The equivalent conditions in Fact~\ref{fact_otto} hold in all ${\EuScript F}$-saturated structures.
\end{fact}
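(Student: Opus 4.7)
We aim to prove condition~(1) of Fact~\ref{fact_otto} in any ${\EuScript F}$-saturated standard structure $M$ (condition~(2) then follows from the equivalence). Since condition~(1) for a type $\sigma(X)$ reduces to the conjunction, over $\varphi\in\sigma$, of condition~(1) for the single formula $\varphi$, it suffices to establish the following for each ${\EuScript F}_{\sf X}(M)$-formula $\varphi(X)$: if $M\models\varphi(C')$ for every neighborhood $C'$ of $C$, then $M\models\varphi(C)$. We argue by induction on the syntactic complexity of $\varphi$; since the formulas of ${\EuScript F}_{\sf X}$ are built from atomic formulas using only $\wedge,\vee,\forall,\exists$, no explicit negation arises in the induction.

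The atomic cases are topological. Atomic and negated atomic ${\EuScript L}_{\sf H}$-formulas do not involve $X$ and there is nothing to check. For an atomic formula $\tau(x\,;\xi)\in X_i$, after fixing arbitrary values for the free ${\sf H}$- and ${\sf S}$-variables, the claim reduces to the identity $\bigcap\{C'_i: C'_i \text{ a compact neighborhood of } C_i\}=C_i$, which holds in the compact Hausdorff space $S$ by separating any point outside $C_i$ from $C_i$ via disjoint opens.

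The inductive step is immediate for $\wedge$ and for $\forall$ (of either sort), since these distribute over the conjunction indexed by neighborhoods and the inductive hypothesis can be applied pointwise. For $\vee$, I argue contrapositively: if neither $\varphi_1(C)$ nor $\varphi_2(C)$ holds in $M$, the inductive hypothesis supplies neighborhoods $C'_1,C'_2$ of $C$ with $\neg\varphi_i(C'_i)$. Their componentwise intersection is again a neighborhood of $C$ (because the intersection of the corresponding open witnesses still contains $C$) and, by the monotonicity of Fact~\ref{fact_trivial}, inherits both negations, contradicting the assumption that $\varphi_1\vee\varphi_2$ holds at every neighborhood of $C$.

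The only step that genuinely uses ${\EuScript F}$-saturation is the case $\varphi=\exists y\,\psi(y\,;X)$. Assuming $M\models\varphi(C')$ for every neighborhood $C'$ of $C$, I consider the partial type
\[
\Pi(y)\ =\ \{\psi(y\,;C')\ :\ C'\text{ a neighborhood of }C\},
\]
whose parameters are only the $M$-parameters of $\psi$, since the compact sets $C'$ enter as symbols of ${\EuScript L}_{\sf S}$ rather than as elements of $M$; in particular $\Pi$ sits over a parameter set of cardinality $<|M|$. The neighborhoods of $C$ are downward directed under componentwise intersection: given $C'_1,\dots,C'_k$, their intersection $D$ is a neighborhood of $C$, and any witness for $\exists y\,\psi(y\,;D)$ also satisfies each $\psi(y\,;C'_i)$ by monotonicity. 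Hence $\Pi(y)$ is finitely consistent and, by ${\EuScript F}$-saturation, realized by some $a\in M$; the inductive hypothesis applied to $\psi(a\,;X)$ yields $\psi(a\,;C)$, whence $\varphi(C)$. The main obstacle is precisely this $\exists$-case, where the downward-directedness of the neighborhood filter drives finite consistency; the remaining cases are routine.
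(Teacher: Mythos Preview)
Your proof is correct and follows essentially the same approach as the paper's: reduce to single formulas, induct on syntax, and use ${\EuScript F}$-saturation together with the downward-directedness of the neighborhood filter to handle the existential quantifiers (the paper just states the swap $\bigwedge_{C'}\exists x\,\psi(x;C')\to\exists x\,\bigwedge_{C'}\psi(x;C')$ and invokes saturation plus the observation that $\exists x\,\psi(x;C'\cap C'')$ is satisfiable). You are simply more explicit about the routine cases the paper waves through; in particular, your treatment of $\vee$ via the componentwise intersection of two witnessing neighborhoods is exactly the finitary analogue of the $\exists$-argument and is correctly justified by Fact~\ref{fact_trivial}.
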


\begin{proof}
  \def\medrel#1{\parbox{5ex}{\hfil $#1$}}
  \def\ceq#1#2#3{\parbox[t]{39ex}{$\displaystyle #1$}\medrel{#2}{$\displaystyle #3$}}

  It suffices to prove the claim for formulas.
  We prove the first of the two implications by induction on the syntax of $\varphi(X)$.
  The existential quantifiers of sort {\sf H} and {\sf S} are the only cases that require attention.
  Assume inductively that

    \ceq{\hfill\bigwedge\Big\{\ \varphi(a\,;C')\ :\ C'\textrm{ neighborhood of }C\Big\}}{\rightarrow}{\varphi(a\,;C)}

  holds for every $\varphi(a\,;C)$.
  Then induction for the existential quantifier follows from

    \ceq{\hfill\bigwedge\Big\{\ \exists x\,\varphi(x\,;C')\ :\ C'\textrm{ neighborhood of }C\Big\}}{\rightarrow}{\exists x\,\bigwedge\Big\{\varphi(x\,;C')\ :\ C'\textrm{ neighborhood of }C\Big\} }

    which is a consequence of saturation and the fact that if $C'$, $C''$ are compact neighbourhoods of $C$ then $\exists x\, \varphi(x;C'\cap C'')$ is satisfiable.
    The proof for the existential quantifier of sort {\sf S} is similar.
\end{proof}

We say that $M$ is \emph{${\EuScript F}$-maximal\/} if it models $\varphi\in{\EuScript F}(M)$ whenever $\varphi$ holds in some ${\EuScript F}$-elemen\-tary extensions of $M$.
By Fact~\ref{fact_saturation} and the following fact, all ${\EuScript F}$-saturated standard structures are ${\EuScript F}$-maximal.

\begin{fact}\label{fact_maximal}
  Let $M$ be a standard structure.
  Then the following are equivalent
  \begin{itemize}
    \item [1.] $M$ is ${\EuScript F}$-maximal
    \item [2.] the conditions in Fact~\ref{fact_otto} hold for every type (equivalently, for every formula). 
  \end{itemize}
\end{fact}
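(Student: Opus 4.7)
The plan is to prove both implications by exploiting Theorem~\ref{thm_compactness} to produce an ${\EuScript F}$-saturated ${\EuScript F}$-elementary extension $N\supseteq M$ and then combining Fact~\ref{fact_trivial} with Fact~\ref{fact_saturation}. I will work throughout with the formula version of the conditions in Fact~\ref{fact_otto}; the equivalence with the type version is immediate, since $\neg\sigma(C)$ is witnessed by some single $\varphi\in\sigma$ with $M\models\neg\varphi(C)$, and ${\sim}\sigma(\tilde C)$ is, by definition, the disjunction over $\varphi\in\sigma$ of the formulas ${\sim}\varphi(\tilde C)$.

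For the implication (2)$\Rightarrow$(1), take $\psi=\varphi(C)\in{\EuScript F}(M)$ that holds in some ${\EuScript F}$-elementary extension $N$, and suppose for contradiction that $M\models\neg\varphi(C)$. Condition~2 produces $\tilde C\cap C=\varnothing$ with $M\models{\sim}\varphi(\tilde C)$. Since pseudonegation keeps us inside ${\EuScript F}_{\sf X}$, this sentence lies in ${\EuScript F}(M)$, so by ${\EuScript F}$-elementarity it passes to $N$; the second clause of Fact~\ref{fact_trivial} then forces $N\models\neg\varphi(C)$, contradicting $N\models\varphi(C)$.

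For (1)$\Rightarrow$(2), fix $\varphi\in{\EuScript F}_{\sf X}(M)$ and $C$ with $M\models\neg\varphi(C)$, and let $N$ be an ${\EuScript F}$-saturated ${\EuScript F}$-elementary extension of $M$. Because $\varphi(C)\in{\EuScript F}(M)$ fails in $M$, it must also fail in $N$: otherwise ${\EuScript F}$-maximality would force $M\models\varphi(C)$. Hence $N\models\neg\varphi(C)$, and Fact~\ref{fact_saturation} (which grants condition~2 of Fact~\ref{fact_otto} in the saturated $N$) yields some $\tilde C\cap C=\varnothing$ with $N\models{\sim}\varphi(\tilde C)$. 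Since ${\sim}\varphi(\tilde C)\in{\EuScript F}(M)$, a second application of ${\EuScript F}$-maximality transfers it back to $M$, finishing the proof.

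The only subtlety is that ${\EuScript F}$-elementarity and ${\EuScript F}$-maximality are one-directional statements here (the fragment ${\EuScript F}$ is not closed under classical negation): every time I need to push information from $M$ to $N$ I must route it through the pseudonegation ${\sim}\varphi$, which stays inside ${\EuScript F}$, and every time I need to push information from $N$ back to $M$ I invoke ${\EuScript F}$-maximality directly rather than pretend ${\EuScript F}$ is closed under $\neg$. Beyond this bookkeeping I anticipate no real obstacle, since Fact~\ref{fact_saturation} has already done the essential work of establishing the conditions of Fact~\ref{fact_otto} in the saturated extension.
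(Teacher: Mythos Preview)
Your proof is correct and follows essentially the same strategy as the paper: both directions hinge on passing to an ${\EuScript F}$-saturated extension (Theorem~\ref{thm_compactness}), invoking Fact~\ref{fact_saturation} there, and using Fact~\ref{fact_trivial} together with ${\EuScript F}$-elementarity/maximality to move sentences back and forth. Your argument for (2)$\Rightarrow$(1) is in fact slightly more streamlined than the paper's, which takes an unnecessary detour through neighborhoods $C'$ of $C$ before applying condition~2 of Fact~\ref{fact_otto}; you apply it directly to $C$, which works just as well.
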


\begin{proof}
  1$\Rightarrow$2.
  If (2) in Fact~\ref{fact_otto} fails, $\neg\varphi(C)\wedge\neg\,{\sim}\varphi(\tilde C)$ holds in $M$ for every $\tilde C\cap C=\varnothing$.
  Let ${\EuScript U}$ be an ${\EuScript F}$-saturated ${\EuScript F}$-elementary extension of $M$.
  As $M$ is ${\EuScript F}$-maximal, $\neg\varphi(C)\wedge\neg\,{\sim}\varphi(\tilde C)$ also holds in ${\EuScript U}$.
  Then (2) in Fact~\ref{fact_otto} fails in ${\EuScript U}$, contradicting Fact~\ref{fact_saturation}.

  2$\Rightarrow$1. 
  Let $N$ be any ${\EuScript F}$-elementary extension of $M$.
  Suppose $N\models\varphi(C)$.
  By (2), it suffices to prove that $M\models\varphi(C')$ for every neighborhood $C'$ of $C$.
  Suppose not.
  Then, by (2) in Fact~\ref{fact_otto}, $M\models{\sim}\varphi(\tilde C)$ for some $\tilde C\cap C'=\varnothing$.
  Then $N\models{\sim}\varphi(\tilde C)$ by ${\EuScript F}$-elementarity.
  As $\tilde C\cap C=\varnothing$, this contradicts Fact~\ref{fact_trivial}.
\end{proof}

\begin{notation}
  In what follows we work in a fixed ${\EuScript F}$-saturated standard structure ${\EuScript U}$.
  We denote by $\kappa$ the cardinality of ${\EuScript U}$.
  We assume that $\kappa$ is a Ramsey cardinal larger than the cardinality of the language.
  However, the willing reader may check that we can do without large cardinals by careful application of the Erd\H{o}s-Rado Theorem, see \cite{TZ}*{Appendix C.3}.\smallskip

  In the following $\sigma(x\,;z\,;X)$ always denotes an ${\EuScript F}_{\sf X}({\EuScript U})$-type of small (i.e.\@ $<\kappa$) cardinality.
\end{notation}

\section{A duality in \boldmath$K(S)$}\label{K(S)}

\def\medrel#1{\parbox{5ex}{\hfil $#1$}}
\def\ceq#1#2#3{\parbox[t]{23ex}{$\displaystyle #1$}\medrel{#2}{$\displaystyle #3$}}

In this section we introduce some notions that are used to describe global types in Section~\ref{nonfinitary}.

Write \emph{$K(S)$\/} for the set of compact subsets of $S$.
In this section ${\EuScript D}$ and ${\EuScript C}$ range over subsets of $K(S)$.
We define 

\ceq{\hfill\emph{${\sim}{\EuScript D}$}}{=}{\big\{\tilde C\ :\ \tilde C\cap C\neq\varnothing\text{ for every }C\in{\EuScript D}\big\}.}

Above we read $\tilde C\cap C\neq\varnothing$ as the negation of $\tilde C\cap C=\varnothing$.
That is, some components of the tuples $\tilde C$ and $C$ have nonempty intersection.

The following fact motivates the definition.

\begin{fact}\label{fact_~definibile}
  Let ${\EuScript D}=\{C\,:\,\varphi(C)\}$.
  Then ${\sim}{\EuScript D}=\{\tilde C\,:\,{\sim}\varphi(\tilde C)\}$.
\end{fact}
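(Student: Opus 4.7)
The plan is to prove the two inclusions separately. One direction follows from Fact~\ref{fact_trivial} and the other from Fact~\ref{fact_otto}, which is available because ${\EuScript U}$ is ${\EuScript F}$-saturated (Fact~\ref{fact_saturation}).

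For the inclusion $\{\tilde C:{\sim}\varphi(\tilde C)\}\subseteq{\sim}{\EuScript D}$: fix $\tilde C$ with ${\sim}\varphi(\tilde C)$ and any $C\in{\EuScript D}$, so $\varphi(C)$ holds. If it were the case that $\tilde C\cap C=\varnothing$, the second part of Fact~\ref{fact_trivial} (applied with $\sigma=\{\varphi\}$) would give $\neg\varphi(C)$, a contradiction. Hence $\tilde C\cap C\neq\varnothing$ for every $C\in{\EuScript D}$, i.e.\ $\tilde C\in{\sim}{\EuScript D}$.

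For the reverse inclusion ${\sim}{\EuScript D}\subseteq\{\tilde C:{\sim}\varphi(\tilde C)\}$: I would argue by contraposition. Suppose $\neg\,{\sim}\varphi(\tilde C)$. Since pseudonegation is syntactically involutive (double negation of atomic ${\EuScript L}_{\sf H}$-formulas returns the original, and the Boolean/quantifier duals are involutions, while $\tau\in X$-atoms are left unchanged), applying Fact~\ref{fact_otto}(2) to the formula ${\sim}\varphi$ at the point $\tilde C$ yields
\[
\neg\,{\sim}\varphi(\tilde C)\ \rightarrow\ \bigvee\Big\{\,{\sim}\,{\sim}\varphi(C)\ :\ C\cap\tilde C=\varnothing\Big\}\ =\ \bigvee\Big\{\,\varphi(C)\ :\ C\cap\tilde C=\varnothing\Big\}.
\]
So there exists $C$ with $\varphi(C)$ and $C\cap\tilde C=\varnothing$, meaning $C\in{\EuScript D}$ witnesses $\tilde C\notin{\sim}{\EuScript D}$.

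The main point to verify carefully is the involutivity of pseudonegation on ${\EuScript F}_{\sf X}$, which is what lets us invoke Fact~\ref{fact_otto}(2) with the type $\sigma=\{{\sim}\varphi\}$ and read off $\varphi$ on the right-hand side; this is essentially a syntactic check built into the definition of ${\sim}$. Everything else is mechanical: the first inclusion is pure monotonicity of pseudonegation, and the second is just saturation/maximality of ${\EuScript U}$ packaged through Fact~\ref{fact_otto}.
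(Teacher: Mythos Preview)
Your proof is correct and uses the same two ingredients as the paper---Fact~\ref{fact_trivial} for one inclusion and Fact~\ref{fact_otto} for the other---but you organize the second inclusion a bit differently. The paper argues the direction $\tilde C\in{\sim}{\EuScript D}\Rightarrow{\sim}\varphi(\tilde C)$ by applying Fact~\ref{fact_otto}(2) to $\varphi$ at the points $S\smallsetminus O$ (for $O$ an open neighborhood of $\tilde C$) to get ${\sim}\varphi(\tilde C')$ with $\tilde C'\subseteq O$, and then closes with Fact~\ref{fact_otto}(1) applied to ${\sim}\varphi$. You instead apply Fact~\ref{fact_otto}(2) directly to ${\sim}\varphi$ and invoke the syntactic involutivity ${\sim}{\sim}\varphi\leftrightarrow\varphi$ to read off a $C\in{\EuScript D}$ disjoint from $\tilde C$. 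Your route is slightly shorter and makes the role of involutivity explicit; the paper's route avoids stating involutivity but uses both halves of Fact~\ref{fact_otto} instead. Either way the content is the same.
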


\begin{proof}
  We need to prove that 

  \ceq{\hfill{\sim}\varphi(\tilde C)}{\Leftrightarrow}{\text{for every } C,\ \text{ if } C\cap\tilde C=\varnothing\ \text{ then } \neg\varphi(C).}

  The implication $\Rightarrow$ follows immediately from Fact~\ref{fact_trivial}.
  To prove $\Leftarrow$ assume the r.h.s.
  Then $\neg\varphi(S\smallsetminus O)$ holds for every open set  $O\supseteq\tilde C$.
  From the second implication in Fact~\ref{fact_otto} we obtain ${\sim}\varphi(\tilde C')$ for some $\tilde C\subseteq\tilde C'\subseteq O$.
As $O$ is arbitrary, we obtain ${\sim}\varphi(\tilde C)$ from the first implication in Fact~\ref{fact_otto}.
\end{proof}

We prove some straightforward inclusions.

\begin{fact}\label{fact_~inclusione}
  For every ${\EuScript D}\subseteq{\EuScript C}$ we have ${\sim}{\EuScript C}\subseteq{\sim}{\EuScript D}$.  
  Moreover, ${\EuScript D}\subseteq{\sim}{\sim}{\EuScript D}$.
\end{fact}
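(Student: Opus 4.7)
Both inclusions are essentially formal consequences of the definition of ${\sim}$, so the plan is to unfold definitions and check containment elementwise; this is a standard Galois-connection-style argument and I do not expect any real obstacle.

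For the first inclusion, I would fix $\tilde C\in{\sim}{\EuScript C}$ and show $\tilde C\in{\sim}{\EuScript D}$. By definition, $\tilde C$ meets every tuple in ${\EuScript C}$, i.e.\ $\tilde C\cap C\neq\varnothing$ for all $C\in{\EuScript C}$. Since ${\EuScript D}\subseteq{\EuScript C}$, this intersection condition in particular holds for every $C\in{\EuScript D}$, which is exactly the membership $\tilde C\in{\sim}{\EuScript D}$.

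For the second inclusion, I would fix $C\in{\EuScript D}$ and verify that $C\in{\sim}{\sim}{\EuScript D}$. Expanding the definition twice, this means: for every $\tilde C\in{\sim}{\EuScript D}$, we must have $C\cap\tilde C\neq\varnothing$. But any $\tilde C\in{\sim}{\EuScript D}$ satisfies, by definition, $\tilde C\cap C'\neq\varnothing$ for every $C'\in{\EuScript D}$; applying this with the specific choice $C'=C\in{\EuScript D}$ gives precisely the required non-emptiness.

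No appeal to Facts~\ref{fact_trivial}--\ref{fact_maximal}, to pseudonegation, or to saturation is needed here; the statement is purely about the combinatorics of the operator ${\sim}$ on $\mathcal P(K(S))$. Together the two inclusions say that ${\sim}$ is order-reversing and that ${\sim}{\sim}$ is an expansion, so in particular $({\sim},{\sim})$ forms a Galois connection on $\mathcal P(K(S))$; this perspective, while not needed for the proof, explains why the fact will be useful in Section~\ref{nonfinitary} for describing global types via ${\sim}{\sim}$-closed sets.
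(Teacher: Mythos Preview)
Your proof is correct and follows the same elementwise, definition-unfolding approach as the paper's; the only cosmetic difference is that the paper argues both inclusions by contrapositive (starting from $C\notin{\sim}{\EuScript D}$, respectively $C\notin{\sim}{\sim}{\EuScript D}$) while you argue them directly.
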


\begin{proof}
  Let $C\notin{\sim}{\EuScript D}$.
  Then $\tilde C\cap C=\varnothing$ for some $\tilde C\in{\EuScript D}$.
  As  $\tilde C\in{\EuScript C}$ we conclude that  $C\notin{\sim}{\EuScript C}$.
  For the second claim, let $C\notin{\sim}{\sim}{\EuScript D}$.
  Then $\tilde C\cap C=\varnothing$ for some $\tilde C\in{\sim}{\EuScript D}$.
  Then $C\notin{\EuScript D}$ follows.
\end{proof}

\begin{fact}
  For every ${\EuScript D}$ the following are equivalent
  \begin{itemize}
    \item [1.] ${\EuScript D}={\sim}{\sim}{\EuScript D}$.
    \item [2.] ${\EuScript D}={\sim}{\EuScript C}$ for some ${\EuScript C}$.
  \end{itemize}
\end{fact}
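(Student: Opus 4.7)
The equivalence is a standard Galois-connection closure fact, so the plan is to derive it entirely from the monotonicity and extensivity properties already established in Fact~\ref{fact_~inclusione}, without any further input about the topology of $S$.

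For $1\Rightarrow 2$, the implication is immediate: if ${\EuScript D}={\sim}{\sim}{\EuScript D}$, then choosing ${\EuScript C}={\sim}{\EuScript D}$ exhibits ${\EuScript D}$ as ${\sim}{\EuScript C}$.

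For $2\Rightarrow 1$, assume ${\EuScript D}={\sim}{\EuScript C}$. By the second assertion of Fact~\ref{fact_~inclusione} applied to ${\EuScript C}$, we have ${\EuScript C}\subseteq{\sim}{\sim}{\EuScript C}$. Applying the first assertion of Fact~\ref{fact_~inclusione} (antitonicity of ${\sim}$) to this inclusion yields
\[
{\sim}{\sim}{\sim}{\EuScript C}\ \subseteq\ {\sim}{\EuScript C}.
\]
Rewriting the left side as ${\sim}{\sim}{\EuScript D}$ and the right side as ${\EuScript D}$, we get ${\sim}{\sim}{\EuScript D}\subseteq{\EuScript D}$. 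Combined with the reverse inclusion ${\EuScript D}\subseteq{\sim}{\sim}{\EuScript D}$ given again by the second assertion of Fact~\ref{fact_~inclusione}, this produces the desired equality.

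There is essentially no obstacle here: the argument is the same as the one that shows, for any Galois connection, that the image of one side consists exactly of the fixed points of the double dual. The only mildly delicate point is keeping the direction of the inclusions straight, since ${\sim}$ reverses them; this is handled by applying antitonicity to ${\EuScript C}\subseteq{\sim}{\sim}{\EuScript C}$ rather than composing ${\sim}$ directly.
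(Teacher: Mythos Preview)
Your proof is correct and follows essentially the same approach as the paper's: both derive ${\sim}{\sim}{\sim}{\EuScript C}\subseteq{\sim}{\EuScript C}$ from Fact~\ref{fact_~inclusione} and then combine this with ${\EuScript D}\subseteq{\sim}{\sim}{\EuScript D}$ to conclude. You spell out the intermediate step (applying antitonicity to ${\EuScript C}\subseteq{\sim}{\sim}{\EuScript C}$) a bit more explicitly than the paper does, but the argument is the same.
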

  
\begin{proof}
  Only 2$\Rightarrow$1 requires a proof.
  From Fact~\ref{fact_~inclusione} we obtain ${\sim}{\sim}{\sim}{\EuScript C}\subseteq{\sim}{\EuScript C}$.
  Assume (2).
  Then ${\sim}{\sim}{\EuScript D}\subseteq{\EuScript D}$ which, again by Fact~\ref{fact_~inclusione}, suffices to prove (1).
\end{proof}

We say that ${\EuScript D}$ is \emph{involutive\/} if ${\EuScript D}={\sim}{\sim}{\EuScript D}$.

\begin{theorem}
  The following are equivalent
  \begin{itemize}
    \item [1.] ${\EuScript D}$ is involutive 
    \item [2.] \noindent\kern-\labelwidth\kern-\labelsep
    \ceq{\hfill C\in{\EuScript D}}{\Leftrightarrow}{C'\in{\EuScript D}} for every neighborhood $C'$ of $C$.
  \end{itemize}
\end{theorem}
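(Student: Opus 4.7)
The plan is to prove the two implications separately, with the only topological input being that the compact Hausdorff space $S$ is normal, so disjoint compact subsets are separated by disjoint open sets.

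For $1 \Rightarrow 2$, I would first dispatch the forward direction: if $C \in {\EuScript D} = {\sim}{\sim}{\EuScript D}$ and $C'$ is a neighbourhood of $C$, then any $\tilde C \in {\sim}{\EuScript D}$ meets $C$, hence meets $C'$, so $C' \in {\sim}{\sim}{\EuScript D} = {\EuScript D}$. For the backward direction I would argue by contraposition: if $C \notin {\EuScript D}$, pick $\tilde C \in {\sim}{\EuScript D}$ with $\tilde C \cap C = \varnothing$, separate $\tilde C$ and $C$ by disjoint opens $U \supseteq C$ and $V \supseteq \tilde C$, and set $C' = S \smallsetminus V$. Then $C'$ is a compact neighbourhood of $C$ (since $C \subseteq U \subseteq C'$) that is disjoint from $\tilde C$, so $\tilde C \cap C' = \varnothing$ together with $\tilde C \in {\sim}{\EuScript D}$ forces $C' \notin {\sim}{\sim}{\EuScript D} = {\EuScript D}$, exhibiting a neighbourhood of $C$ outside ${\EuScript D}$.

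For $2 \Rightarrow 1$, the inclusion ${\EuScript D} \subseteq {\sim}{\sim}{\EuScript D}$ is Fact~\ref{fact_~inclusione}. For the converse, given $C \in {\sim}{\sim}{\EuScript D}$ I would apply the $\Leftarrow$ half of (2) to reduce the problem to showing that every neighbourhood $C'$ of $C$ lies in ${\EuScript D}$. Fixing such a $C'$ with $C \subseteq U \subseteq C'$ and $U$ open, the candidate witness is $\tilde C = S \smallsetminus U$, which is compact and disjoint from $C$. The crucial claim is $\tilde C \in {\sim}{\EuScript D}$, which I would obtain by contradiction using the forward (monotonicity) half of (2): if some $D \in {\EuScript D}$ were disjoint from $\tilde C$, then $D \subseteq U \subseteq C'$ would make $C'$ a neighbourhood of $D$, forcing $C' \in {\EuScript D}$ and contradicting the assumed failure. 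Once $\tilde C \in {\sim}{\EuScript D}$ is established, the hypothesis $C \in {\sim}{\sim}{\EuScript D}$ yields $\tilde C \cap C \ne \varnothing$, contradicting $C \subseteq U$.

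I expect the delicate step to be the correct choice of witness $\tilde C$ in both backward arguments: in one direction the separation must produce a neighbourhood of $C$ avoiding a given $\tilde C$, and in the other it must produce a $\tilde C$ that is simultaneously disjoint from $C$ and certifiably an element of ${\sim}{\EuScript D}$. Both constructions are instances of normality combined with the monotonicity of ${\EuScript D}$ packaged in the forward half of (2), so the main thing to watch is invoking the correct direction of (2) at each step.
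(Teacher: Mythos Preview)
Your proposal is correct and follows essentially the same route as the paper. The paper organizes $1\Rightarrow 2$ as ``condition (2) holds for any set of the form ${\sim}{\EuScript C}$, hence for ${\EuScript D}={\sim}({\sim}{\EuScript D})$'', which is your argument unpacked; for $2\Rightarrow 1$ the paper gives the direct version of your double contradiction (from $C\in{\sim}{\sim}{\EuScript D}$ and $\tilde C\cap C=\varnothing$ conclude $\tilde C\notin{\sim}{\EuScript D}$ immediately, then extract $D\in{\EuScript D}$ with $D\subseteq U$ and apply the forward half of (2)), so you may want to streamline that step.
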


It is not difficult to see that (2) holds if and only if ${\EuScript D}$ is closed in the product of Vietoris topologies and includes all the supersets of its elements.

\begin{proof}
  2$\Rightarrow$1.
  By Fact~\ref{fact_~inclusione} it suffices to prove ${\sim}{\sim}{\EuScript D}\subseteq{\EuScript D}$.
  Let $C\in{\sim}{\sim}{\EuScript D}$.
  Let $O$ be a tuple of open sets containing $C$ componentwise.
  Let $\tilde C$ be the componentwise complement of $O$.  
  Then $\tilde C\notin{\sim}{\EuScript D}$.
  As ${\sim}{\EuScript D}=\{\tilde C:  \hat C\cap\tilde C\neq\varnothing\text{ for every }\hat C\in{\EuScript D}\}$, we have that $\hat C\in{\EuScript D}$ for some $\hat C\subseteq O$.
  Assume (2).
  Then, by $\Rightarrow$ every $C'\supseteq O$ is in ${\EuScript D}$.
  As $O$ is arbitrary, $C\in {\EuScript D}$ by $\Leftarrow$.

  1$\Rightarrow$2.
  It suffices to show that (2) holds with ${\sim}{\EuScript D}$ for ${\EuScript D}$.
  The implication $\Rightarrow$ in (2) is obvious.
  To prove $\Leftarrow$, assume $C\notin{\sim}{\EuScript D}$.
  Then $C\cap\tilde C=\varnothing$ for some $\tilde C\in{\EuScript D}$.
  Let $C'$ be a neighboorhood of $C$ disjoint from $\tilde C$.
  Then the r.h.s.\@ of the equivalence fails for $C'$.  
\end{proof}

\section{Stable formulas~--~the finitary case}\label{finitary}
\def\medrel#1{\parbox{5ex}{\hfil $#1$}}
\def\ceq#1#2#3{\parbox[t]{22ex}{$\displaystyle #1$}\medrel{#2}{$\displaystyle #3$}}

In this section we consider the strongest possible notion of order property.
It is not the most interesting one, but it is very similar to the classical definition~--~the comparison may be useful, though not stricly required for the sequel.

The following definition of stability is the classical one.
We dub it \textit{finitary\/} for the reason explained below.

\begin{definition}\label{def_finitary_stable}\strut
  Let $\varphi(x\,;z\,;X)$, for $x$ and $z$ tuples of variables of sort ${\sf H}$, be an ${\EuScript F}_{\sf X}$-for\-mu\-la.  
  Let $C$ be a fixed tuple of compact sets.
  We say that $\varphi(x\,;z\,;C)$ is \emph{finitarily unstable\/} if for every $m<\omega$ there is a sequence $\langle a_i\,;b_i\ :\ i<m\rangle$ such that\smallskip

  \ceq{1.\hfill \varphi(a_n\,;b_i\,;C)}{\wedge}{\neg\varphi(a_i\,;b_n\,;C)}\hfill for every $i<n<m$.\smallskip

  A formula is \emph{finitary stable\/} if it is not finitarily unstable.
\end{definition}

The following is an equivalent definition which only mentions formulas in ${\EuScript F}$.
The reader may wish to compare it with Definition~\ref{def_stable}.

\begin{fact}\label{fact_stability_semicalssic}
  The following are equivalent
  \begin{itemize}
    \item [1.]  $\varphi(x\,;z\,;C)$ is finitarily stable
    \item [2.]  for some $m$ there is no sequence $\langle a_i\,;b_i\ :\ i<m\rangle$ such that for some $\tilde C\cap C=\varnothing$\smallskip
    
    \noindent\kern-\labelwidth\kern-\labelsep
    \ceq{\hfill \varphi(a_n\,;b_i\,;C)}{\wedge}{{\sim}\varphi(a_i\,;b_n\,;\tilde C)}\hfill for every $i<n<m$.
  \end{itemize}
\end{fact}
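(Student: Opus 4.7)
My plan is to handle the two implications asymmetrically, since $(1)\Rightarrow(2)$ is an immediate consequence of Fact~\ref{fact_trivial}, whereas $(2)\Rightarrow(1)$ requires saturation together with a Ramsey-style extraction of an indiscernible sequence.

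For $(1)\Rightarrow(2)$ I would take $m'=m$ and argue by contradiction. If $\langle a_i\,;b_i : i<m\rangle$ and $\tilde C\cap C=\varnothing$ witnessed the configuration in~(2), then the second implication of Fact~\ref{fact_trivial}, applied to the single formula $\varphi$, upgrades ${\sim}\varphi(a_i\,;b_n\,;\tilde C)$ to $\neg\varphi(a_i\,;b_n\,;C)$ for every $i<n<m$. Combined with the clauses $\varphi(a_n\,;b_i\,;C)$ that are already there, this is precisely the configuration forbidden by finitary stability at $m$, a contradiction.

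For $(2)\Rightarrow(1)$ I would argue the contrapositive. Assuming finitary instability, compactness (Theorem~\ref{thm_compactness}) produces an infinite sequence $\langle a_i\,;b_i: i<\omega\rangle$ in ${\EuScript U}$ with $\varphi(a_n\,;b_i\,;C)\wedge\neg\varphi(a_i\,;b_n\,;C)$ for $i<n<\omega$. Using that $\kappa$ is Ramsey, I would extract from it an ${\EuScript F}$-indiscernible sub-sequence over $\varnothing$ (the Erdős--Rado alternative referenced in the notation block would also do). This sub-sequence still witnesses the order property since that property is inherited by sub-sequences. From $\neg\varphi(a_0\,;b_1\,;C)$, Fact~\ref{fact_saturation} applied to condition~(2) of Fact~\ref{fact_otto} produces some $\tilde C$ with $\tilde C\cap C=\varnothing$ and ${\sim}\varphi(a_0\,;b_1\,;\tilde C)$.

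The one delicate step, and the main obstacle, is to promote this single choice of $\tilde C$ to every pair $(a_i,b_n)$ with $i<n$. This is resolved by observing that $\tilde C$ is a tuple of compact subsets of the standard sort, i.e.\@ a tuple of ${\EuScript L}_{\sf S}$-relation symbols rather than a tuple of parameters from ${\EuScript U}$; consequently ${\sim}\varphi(x\,;z\,;\tilde C)$ is an ${\EuScript L}$-formula, and indiscernibility over $\varnothing$ transfers its truth from $(a_0,b_1)$ to every pair $(a_i,b_n)$ with $i<n$. Truncating to the first $m$ terms then supplies, for every $m$, the sequence whose non-existence~(2) demanded, completing the proof.
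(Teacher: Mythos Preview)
Your argument for $(1)\Rightarrow(2)$ is correct and coincides with the paper's.

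Your argument for $(2)\Rightarrow(1)$ has a genuine gap at the compactness step. You assert that Theorem~\ref{thm_compactness} produces an infinite sequence in ${\EuScript U}$ with $\varphi(a_n\,;b_i\,;C)\wedge\neg\varphi(a_i\,;b_n\,;C)$ for all $i<n<\omega$. But ${\EuScript U}$ is only ${\EuScript F}$-saturated, and ${\EuScript F}$-saturation realizes only ${\EuScript F}$-types. The clauses $\neg\varphi(x_i\,;z_n\,;C)$ are not in ${\EuScript F}$ in general: negation is not among the connectives of ${\EuScript F}$, and $\neg(\tau\in C)$ is not one of the admissible atomic formulas. So the type encoding the infinite order property is not an ${\EuScript F}$-type and need not be realized in ${\EuScript U}$. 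This is precisely the subtlety the paper flags immediately after this fact (``In a classical context, i.e.\ relying on ${\EuScript L}$-saturation, one can replace $m$ in Definition~\ref{def_finitary_stable} by $\omega$. But this may not be true if we only have ${\EuScript F}$-saturation.''). The subsequent Ramsey/indiscernibility machinery therefore never gets off the ground.

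The paper's proof of $(2)\Rightarrow(1)$ sidesteps this entirely and is elementary. Assuming finitary instability, fix an arbitrary $m$ and a length-$m$ witness $\langle a_i\,;b_i:i<m\rangle$. Apply Fact~\ref{fact_otto} (valid in ${\EuScript U}$ by Fact~\ref{fact_saturation}) to each instance $\neg\varphi(a_i\,;b_n\,;C)$ to obtain some $\tilde C_{n,i}$ disjoint from $C$ with ${\sim}\varphi(a_i\,;b_n\,;\tilde C_{n,i})$. Now set $\tilde C$ equal to the finite componentwise union $\bigcup_{i<n<m}\tilde C_{n,i}$; this is still compact and disjoint from $C$, and by the monotonicity in Fact~\ref{fact_trivial} we get ${\sim}\varphi(a_i\,;b_n\,;\tilde C)$ for every $i<n<m$. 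Since $m$ was arbitrary, $(2)$ fails. No infinite sequences, no Ramsey, no indiscernibles are needed.
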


\begin{proof}
  1$\Rightarrow$2.
  By Fact~\ref{fact_trivial}.

  2$\Rightarrow$1.
  Negate (1).
  Let $\langle a_i\,;b_i\ :\ i<m\rangle$ be such that (1) of Definition~\ref{def_finitary_stable} holds.
  By Fact~\ref{fact_otto} there are $\tilde C_{n,i}\cap C=\varnothing$ such that 

  \ceq{\hfill \varphi(a_n\,;b_i\,;C)}{\wedge}{{\sim}\varphi(a_i\,;b_n\,;\tilde C_{n,i})}\hfill for every $i<n<m$.

  Then the union of the $\tilde C_{n,i}$ is the set $\tilde C$ required to negate (2).
\end{proof}

In a classical context, i.e.\@ relying on ${\EuScript L}$-saturation, one can replace $m$ in Definition~\ref{def_finitary_stable} by $\omega$.
But this may not be true if we only have ${\EuScript F}$-saturation.
Therefore we dedicate a separate section to the non-finitary version of stability.

The following definitions are classical, i.e.\@ they rely on the full language ${\EuScript L}$.

\begin{definition}\label{def_globaltype}\strut
  For a given $C$, a \emph{global $\varphi(x\,;z\,;C)$-type\/} is a maximally (finitely) consistent set of formulas of the form $\varphi(x\,;b\,;C)$ or $\neg\varphi(x\,;b\,;C)$ for some $b\in{\EuScript U}^{z}$.
\end{definition}

Global $\varphi(x\,;z\,;C)$-types should not be confused with global $\varphi(x\,;z\,;X)$-types which are introduced in the following section.

In this section the symbol ${\EuScript D}$ always denotes a subset of ${\EuScript U}^z$.

\begin{definition}\label{def_approx}\strut
  We say that ${\EuScript D}$ is \emph{approximable\/} by $\varphi(x\,;z\,;C)$ if for every finite $B\subseteq {\EuScript U}^z$ there is an $a\in {\EuScript U}^x$ such that\smallskip

  \ceq{\hfill b\in {\EuScript D}}{\Leftrightarrow}{\varphi(a\,;b\,;C)}\hfill for every $b\in B$.
\end{definition}

It goes without saying that these definitions describe two sides of the same coin.
In fact, it is clear that ${\EuScript D}$ is approximable by $\varphi(x\,;z\,;C)$ if and only if  
    
  \noindent\kern-\labelwidth\kern-\labelsep
  \ceq{\hfill p(x)}{=}{\big\{\varphi(x\,;b\,;C)\ :\ b\in{\EuScript D}\big\}\ \ \cup\ \ \big\{\neg\varphi(x\,;b\,;C)\ :\ b\notin{\EuScript D}\big\}}

is a global $\varphi(x\,;z\,;C)$-type.\smallskip

The following theorem is often rephrased by saying that, when $\varphi(x\,;z\,;C)$ is finitarily stable, all global $\varphi(x\,;z\,;C)$-types are definable.
The proof of the classical case applies verbatim because no saturation is required.
We only state it here for comparison with its non-finitary counterpart, Theorem~\ref{thm_stability_definability}, and its approximate counterpart, Theorem~\ref{thm_epsilon_stability_definability}.

\begin{theorem}
  Let $\varphi(x\,;z\,;C)$ be finitarily stable.
  Let ${\EuScript D}$ be approximable by $\varphi(x\,;z\,;C)$.
  Then there are some $\langle a_{i,j}\ :\ i< k,\ j<m\rangle$ such that for every $b\in{\EuScript U}^z$\medskip

  \ceq{\hfill b\in{\EuScript D}}{\Leftrightarrow}{\bigvee_{i< k}\ \bigwedge_{j<m}\ \varphi(a_{i,j}\,;b\,;C).}
\end{theorem}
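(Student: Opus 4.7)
The plan is to follow the classical proof of the definability of $\varphi$-types for stable $\varphi$ verbatim, as the authors remark is possible: the argument relies only on the finitary stability bound $m$ from Definition~\ref{def_finitary_stable} together with the finite-consistency information encoded by approximability (Definition~\ref{def_approx}), and invokes no saturation.

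Approximability of ${\EuScript D}$ is equivalent to the statement that the global $\varphi(x\,;z\,;C)$-type
\[p(x)\ =\ \big\{\varphi(x\,;b\,;C):b\in{\EuScript D}\big\}\ \cup\ \big\{\neg\varphi(x\,;b\,;C):b\notin{\EuScript D}\big\}\]
is finitely realised in ${\EuScript U}$. Hence for every finite $B\subseteq{\EuScript U}^z$ there is $a_B\in{\EuScript U}^x$ satisfying all formulas of $p$ with parameter in $B$. The task is to convert this finite realisability data into a defining formula of the required disjunction-of-conjunctions shape.

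The combinatorial heart is a tree construction of bounded depth, parallel to the computation of Shelah's $2$-rank. One attempts to build pairs $(a_\eta,b_\eta)$ indexed by nodes $\eta$ of a growing binary tree so that along each branch of length $k$ one obtains a sequence $(a_i,b_i)_{i<k}$ realising the pattern $\varphi(a_n\,;b_i\,;C)\wedge\neg\varphi(a_i\,;b_n\,;C)$ for $i<n<k$ forbidden by Definition~\ref{def_finitary_stable}. Finitary stability therefore halts the construction at some depth strictly less than $m$, leaving only finitely many witnesses to serve as the $a_{i,j}$'s.

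The defining formula is then read off from the surviving leaves of the halted tree: each leaf on the ``positive'' side corresponds to a conjunction $\bigwedge_{j<m}\varphi(a_{i,j}\,;z\,;C)$, and their disjunction recovers ${\EuScript D}$. The main point requiring attention is the verification that if some $b\in{\EuScript U}^z$ made the disjunction disagree with ${\EuScript D}$, the tree would admit one more splitting level, contradicting the stopping condition; this is exactly where approximability supplies the finite consistency that in the classical treatment would be taken from saturation. Because the forbidden pattern uses only $\neg\varphi$ (never the pseudonegation ${\sim}\varphi$), no adaptation of the classical argument is needed.
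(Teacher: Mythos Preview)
The paper gives no explicit proof here: it simply remarks that ``the proof of the classical case applies verbatim because no saturation is required,'' and the classical argument it has in mind is the one spelled out for the non-finitary and $\varepsilon$-stable variants in Sections~\ref{nonfinitary} and~\ref{epsilonstable} (three lemmas: approximability from below yields a finite disjunction; approximability yields approximability from below by a finite conjunction; that conjunction is again stable). Your sketch instead gestures at the Shelah $2$-rank/binary-tree route, which is an equally standard and equally valid way to reach the same conclusion. The three-lemma route has the virtue of running in exact parallel with the paper's other two proofs, which is the authors' stated reason for including this section; the rank route is perhaps more widely known but would not integrate as cleanly with the rest of the paper.

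One point of caution about your sketch: the tree you describe is not quite the $2$-rank tree. Branches of a $2$-rank tree do not directly produce order-property sequences $\varphi(a_n;b_i;C)\wedge\neg\varphi(a_i;b_n;C)$; rather, each node carries a splitting parameter $b_\eta$ and consistency data for the partial type along the branch, and one needs a separate (standard) argument to pass from an infinite binary tree to an order-property sequence of the same length. Conversely, if every branch already yielded such a sequence, the tree structure would be doing no work. What you have written reads more like a hybrid of the rank argument and the direct sequence-building argument the paper actually uses. This is not a genuine gap, since either classical argument does go through unmodified, but if you intend to flesh this out you should commit to one of them.
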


\section{Stable formulas~--~the non-finitary case}\label{nonfinitary}
\def\medrel#1{\parbox{5ex}{\hfil $#1$}}
\def\ceq#1#2#3{\parbox[t]{22ex}{$\displaystyle #1$}\medrel{#2}{$\displaystyle #3$}}

We discuss a version of stability that seems more appropriate in our context.
There are a few differences with the previous section.
First, here the attribute \textit{stable\/} refers to a whole class of formulas~--~all instances of some $\varphi(x\,;z\,;X)$.
Furthermore, the sequence witnessing the order property is required to be infinite.
Finally, since we need to extend the definition of stability to types, sequences of length $\kappa$ (the cardinality of ${\EuScript U}$, a Ramsey cardinal) are used.

In this section  ${\EuScript D}$ is always a subset of ${\EuScript U}^z\times K(S)^{|X|}$.
We write ${\sim}{\EuScript D}$ for the set obtained by applying the definition in Section~\ref{K(S)} to all fibers of ${\EuScript D}$.
That is, if ${\EuScript D}_b$ is the $b$-fiber $\big\{C\,:\,\langle b\,;C\rangle\in{\EuScript D}\big\}$, we define

\ceq{\hfill{\sim}{\EuScript D}}{=}{\bigcup_{b\in{\EuScript U}^z}\{b\}\times({\sim}{\EuScript D}_b).}

We say that ${\EuScript D}$ is involutive if all its fibers are.

\begin{definition}\label{def_stable}\strut
  Let $x$ and $z$ be tuples of variables of sort ${\sf H}$.
  Recall that $\sigma(x\,;z\,;X)$ always denotes a small ${\EuScript F}_{\sf X}({\EuScript U})$-type.\smallskip

  We say that $\sigma(x\,;z\,;X)$ is \emph{unstable\/} if there are a sequence $\langle a_i\,;b_i\ :\ i<\kappa\rangle$, some $C$, and some $\tilde C\cap C=\varnothing$ such that\smallskip

  \ceq{\hfill \sigma(a_n\,;b_i\,;C)}{\wedge}{{\sim}\sigma(a_i\,;b_n\,;\tilde C)}\hfill for every $i<n<\kappa$ or for every $n<i<\kappa$.\smallskip

  A type is \emph{stable\/} if it is not unstable.
\end{definition}

By saturation, when $\sigma(x\,;z\,;X)$ is a formula we can replace $\kappa$ with $\omega$ in the previous definition

\begin{definition}\strut 
  Let $\varphi(x\,;z\,;X)$ be an ${\EuScript F}_{\sf X}$-formula.
  A \emph{global $\varphi(x\,;z\,;X)$-type\/} is a maximally (finitely) consistent set of formulas of the form $\varphi(x\,;b\,;C)$ or ${\sim}\varphi(x\,;b\,;C)$ for some $b\in{\EuScript U}^{z}$ and some $C$.
\end{definition}

It is convenient to have a different characterization of global types, therefore  the following definition.

\begin{definition}\label{def_approx_X}\strut
  We say that ${\EuScript D}$ is \emph{approximable\/} by $\sigma(x\,;z\,;X)$ if for every small set $B\subseteq{\EuScript U}^z$ there is an $a\in{\EuScript U}^x$ such that\smallskip

  \ceq{1.\hfill \langle b,C\rangle\in{\EuScript D}}{\Rightarrow}{\sigma(a\,;b\,;C)}\quad and\smallskip

  \ceq{2.\hfill\langle b,C\rangle\in {\EuScript D}}{\Leftarrow}{\sigma(a\,;b\,;C)}\hfill for every $b\in B$ and every $C$.\smallskip
\end{definition}

When $\sigma(x\,;z\,;X)$ is a formula and  ${\EuScript D}$ is involutive (2) in the above definition can be rephrased in a more convenient form.

\begin{remark}\label{rem_approx}
  If ${\EuScript D}$ is involutive, the following are equivalent\smallskip

  \ceq{2.\hfill\langle b,C\rangle\in {\EuScript D}}{\Leftarrow}{\varphi(a\,;b\,;C)}\smallskip

  \ceq{2'\hfill\langle b,C\rangle\in{\sim}{\EuScript D}}{\Rightarrow}{{\sim}\varphi(a\,;b\,;C)}.
\end{remark}

\begin{proof}
  Let ${\EuScript C}=\{\langle b,C\rangle\,:\,\varphi(a,b,C)\}$.
  This is also an involutive set by Fact~\ref{fact_~definibile}.
  Then (2) says ${\EuScript C}\subseteq{\EuScript D}$.
  By Fact~\ref{fact_~inclusione} this is equivalent to ${\sim}{\EuScript D}\subseteq{\sim}{\EuScript C}$ which in turn is equivalent to (2$'$), again by Fact~\ref{fact_~definibile}.
\end{proof}

Under the assumptions of the remark (i.e. when ${\EuScript D}$ is involutive and $\sigma(x\,;z\,;X)$ is a formula), it is easy to verify by saturation that in Definition~\ref{def_approx_X} the requirement \textit{for every small\/} $B$ can be replaced by \textit{for every finite\/} $B$ (indeed, one can verify that the proof of the following fact also works when $B$ is required to be finite).

\begin{fact}
  Let $\varphi(x\,;z\,;X)$ be an ${\EuScript F}_{\sf X}$-formula.
  For every involutive ${\EuScript D}$ the following are equivalent
  \begin{itemize}
    \item [1.] ${\EuScript D}$ is approximable by $\varphi(x\,;z\,;X)$
    \item [2.] the following is a global $\varphi(x\,;z\,;X)$-type\smallskip
    
    \noindent\kern-\labelwidth\kern-\labelsep
    \ceq{\hfill p(x)}{=}{\big\{\phantom{\sim}\varphi(x\,;b\,;C)\ :\ \langle b,C\rangle\in{\EuScript D}\big\}\ \ \cup}\smallskip

    \noindent\kern-\labelwidth\kern-\labelsep
    \ceq{}{~}{\big\{{\sim}\varphi(x\,;b\,;C)\ :\ \langle b,C\rangle\in{\sim}{\EuScript D}\big\}.}
  
  \end{itemize}
\end{fact}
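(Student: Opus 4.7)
The plan is to prove the two implications separately, using Remark~\ref{rem_approx} throughout to swap condition~(2) of Definition~\ref{def_approx_X} with the equivalent~(2$'$), which is available because ${\EuScript D}$ is assumed involutive.

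For the direction 1$\Rightarrow$2, I verify that $p(x)$ is both finitely consistent and maximal among sets of formulas of the form $\varphi(x\,;b\,;C)$ or ${\sim}\varphi(x\,;b\,;C)$. Finite consistency is immediate: given a finite $F\subseteq p$, let $B$ be the finite set of home-sort parameters occurring in $F$; applying approximability on this $B$ yields an $a$ that satisfies $\varphi(a\,;b\,;C)$ for every $\langle b,C\rangle\in{\EuScript D}$ with $b\in B$ (by~(1)) and ${\sim}\varphi(a\,;b\,;C)$ for every $\langle b,C\rangle\in{\sim}{\EuScript D}$ with $b\in B$ (by~(2$'$)); so $a$ realizes $F$. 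For maximality, suppose some $\psi$ not in $p$ can be adjoined without destroying finite consistency. If $\psi=\varphi(x\,;b\,;C)$ then $\langle b,C\rangle\notin{\EuScript D}$, and involutivity gives $C\notin{\sim}{\sim}{\EuScript D}_b$, so there is $\tilde C\in{\sim}{\EuScript D}_b$ with $\tilde C\cap C=\varnothing$; hence ${\sim}\varphi(x\,;b\,;\tilde C)\in p$. By Fact~\ref{fact_trivial} the pair $\{\varphi(x\,;b\,;C),{\sim}\varphi(x\,;b\,;\tilde C)\}$ is inconsistent, a contradiction. The symmetric case $\psi={\sim}\varphi(x\,;b\,;C)$ proceeds by unpacking the definition of ${\sim}{\EuScript D}_b$ directly: from $\langle b,C\rangle\notin{\sim}{\EuScript D}$ one extracts $\hat C\in{\EuScript D}_b$ disjoint from $C$, so $\varphi(x\,;b\,;\hat C)\in p$, and Fact~\ref{fact_trivial} again produces a two-formula inconsistency.

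For the direction 2$\Rightarrow$1, given a small $B\subseteq{\EuScript U}^z$, I consider the restriction $p\restriction B$ consisting of those formulas of $p$ whose home-sort parameter lies in $B$. Its parameters live in the small set $B$, so by ${\EuScript F}$-saturation of ${\EuScript U}$ the finitely consistent type $p\restriction B$ is realized by some $a\in{\EuScript U}^x$. Such an $a$ directly witnesses~(1) and~(2$'$) of Definition~\ref{def_approx_X} on $B$, which by Remark~\ref{rem_approx} yields~(2) as well; hence ${\EuScript D}$ is approximable by $\varphi(x\,;z\,;X)$.

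The main obstacle is the maximality step, which is where the geometric content of involutivity enters: one must translate the set-theoretic condition $C\notin{\sim}{\sim}{\EuScript D}_b$ into the existence of a compact tuple $\tilde C$ disjoint from $C$ and sitting in ${\sim}{\EuScript D}_b$, so that a formula already in $p$ can be paired with the hypothetical addition to invoke Fact~\ref{fact_trivial}. Once this correspondence between fiber-level operations on ${\EuScript D}$ and the combinatorics of pseudonegation is pinned down, the rest is bookkeeping.
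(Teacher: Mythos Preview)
Your proof is correct and follows essentially the same approach as the paper's. The only cosmetic differences are that the paper treats 2$\Rightarrow$1 first and, in 1$\Rightarrow$2, proves maximality before consistency, phrasing maximality as ``$\psi$ consistent with $p$ implies $\psi\in p$'' rather than your contrapositive; the underlying use of involutivity and Fact~\ref{fact_trivial} is identical.
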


\begin{proof}
  2$\Rightarrow$1.
  Let $B$ be a given small subset of ${\EuScript U}^z$.
  The consistency of $p(x)\restriction B$ provides some $a$ such that

  \ceq{\hfill \langle b,C\rangle\in {\EuScript D}}{\Rightarrow}{\phantom{\sim}\varphi(a\,;b\,;C)}\quad and 
  
  \ceq{\hfill \langle b,C\rangle\in{\sim}{\EuScript D}}{\Rightarrow}{{\sim}\varphi(a\,;b\,;C)}\hfill for every $b\in B$ and every $C$. 

  By Remark~\ref{rem_approx}, these yield (1) and (2) in Definition~\ref{def_approx_X}.
  
  1$\Rightarrow$2.
  First we prove that any $p(x)$ defined as in (2) is maximal whenever it is consistent~--~in fact, this only depends on ${\EuScript D}$ being involutive.
  We need to consider two cases.
  First, assume that $\varphi(x\,;b\,;C)$ is consistent with $p(x)$.
  Then consistency implies that $\langle b,\tilde C\rangle\notin{\sim}{\EuScript D}$ for every $\tilde C\cap C=\varnothing$.
  But this implies that $\langle b, C\rangle\in{\sim}{\sim}{\EuScript D}={\EuScript D}$, therefore $\varphi(x\,;b\,;C)\in p$.
  The second case is when ${\sim}\varphi(x\,;b\,;C)$ is consistent with $p(x)$.
  This implies that $\langle b,\tilde C\rangle\notin{\EuScript D}$ for every $\tilde C\cap C=\varnothing$.
  Then $\langle b,C\rangle\in{\sim}{\EuScript D}$, therefore ${\sim}\varphi(x\,;b\,;C)\in p$.

  Finally, to prove consistency, let $B$ be a given small set.
  Let $a$ be as in Definition~\ref{def_approx_X}.
  Then $a\models p(x)\restriction B$ by Remark~\ref{rem_approx}.
\end{proof}

The proof of the main theorem of this section requires the following notion of approximation.
The definition is inspired by the classical notion of approximation from below in~\cite{Z15} where it is used as rephrasing of the notion of \textit{honest definability\/} in~\cite{CS}.
Here this notion plays a purely technical role, but we present it as potentially interesting in its own right.

\begin{definition}\label{def_approx_blw}\strut
  We say that ${\EuScript D}$ is \emph{approximable\/} by $\sigma(x\,;z\,;X)$ \emph{from below\/} if for every small set $B\subseteq{\EuScript U}^z$ there is an $a\in{\EuScript U}^x$ such that for every $C$\smallskip

  \ceq{1.\hfill \langle b,C\rangle\in{\EuScript D}}{\Rightarrow}{\sigma(a\,;b\,;C)}\hfill for every $b\in B$ and\smallskip

  \ceq{2.\hfill\langle b,C\rangle\in {\EuScript D}}{\Leftarrow}{\sigma(a\,;b\,;C)}\hfill for every $b\in{\EuScript U}^z$.\smallskip
\end{definition}

\begin{theorem}\label{thm_stability_definability}
  Let $\sigma(x\,;z\,;X)$ be stable.
  Assume that ${\EuScript D}$ is approximable by $\sigma(x\,;z\,;X)$.
  Then there are some $\langle a_{i,j}\ :\ i,j<\lambda\rangle$ such that for every $b\in{\EuScript U}^z$ and every $C$\medskip

  \ceq{\hfill \langle b,C\rangle\in{\EuScript D}}{\Leftrightarrow}{\bigvee_{i<\lambda}\ \bigwedge_{j<\lambda}\ \sigma(a_{i,j}\,;b\,;C).}\medskip

\end{theorem}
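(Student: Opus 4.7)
The plan is to proceed via the intermediate notion of approximability from below (Definition~\ref{def_approx_blw}). Concretely, I would prove two reductions: (i) if $\sigma(x\,;z\,;X)$ is stable and ${\EuScript D}$ is approximable by $\sigma$, then ${\EuScript D}$ is approximable from below by $\sigma$; and (ii) if ${\EuScript D}$ is approximable from below by $\sigma$, then ${\EuScript D}$ admits a representation of the form $\bigvee_{i<\lambda}\bigwedge_{j<\lambda}\sigma(a_{i,j}\,;b\,;C)$. Step (i) carries the substantive use of stability; step (ii) is a fairly direct unwinding of the definition.

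For step (i), I would argue by contraposition. Suppose some small $B_0\subseteq{\EuScript U}^z$ witnesses the failure of approximability from below: every $a$ approximating ${\EuScript D}$ on $B_0$ has an ``overshoot'', i.e., some $\langle b,C\rangle\notin{\EuScript D}$ with $\sigma(a\,;b\,;C)$. Build by transfinite recursion a sequence $\langle a_\alpha,b_\alpha,C_\alpha\,:\,\alpha<\kappa\rangle$ where $a_\alpha$ approximates ${\EuScript D}$ on $B_0\cup\{b_\beta\,:\,\beta<\alpha\}$ (possible since this set is small) and $(b_\alpha,C_\alpha)$ is an overshoot of $a_\alpha$. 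For $\beta<\alpha$, approximation on $b_\beta\in B_\alpha$ together with $\langle b_\beta,C_\beta\rangle\notin{\EuScript D}$ forces $\neg\sigma(a_\alpha\,;b_\beta\,;C_\beta)$, and Fact~\ref{fact_otto} then supplies $\tilde C_{\alpha,\beta}\cap C_\beta=\varnothing$ with ${\sim}\sigma(a_\alpha\,;b_\beta\,;\tilde C_{\alpha,\beta})$; meanwhile $\sigma(a_\alpha\,;b_\alpha\,;C_\alpha)$ holds by choice. Applying the Ramsey property of $\kappa$ to a small-valued pair-coloring (recording, e.g., the truth value of $\sigma(a_\beta\,;b_\alpha\,;C_\alpha)$ for $\beta<\alpha$) and exploiting the compactness of $K(S)^{|X|}$ in the Vietoris topology, I would extract a $\kappa$-subsequence along which the varying compact tuples collapse to a single pair $(C,\tilde C)$ with $\tilde C\cap C=\varnothing$, displaying the order-property pattern of Definition~\ref{def_stable} in one of the two orderings and contradicting stability.

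For step (ii), given ${\EuScript D}$ approximable from below, for each small $B\subseteq{\EuScript U}^z$ pick $a_B$ as in Definition~\ref{def_approx_blw}. The set $\Sigma_{a_B}:=\{(b,C)\,:\,\sigma(a_B\,;b\,;C)\}$ is contained in ${\EuScript D}$ globally and agrees with ${\EuScript D}$ on each fiber $\{b\}\times K(S)^{|X|}$ for $b\in B$. Letting $B$ range over singletons in ${\EuScript U}^z$ yields $\bigcup_B\Sigma_{a_B}={\EuScript D}$, which after suitable indexing gives the displayed representation of ${\EuScript D}$ as $\bigvee_{i<\lambda}\bigwedge_{j<\lambda}\sigma(a_{i,j}\,;b\,;C)$; the inner conjunction encodes the conjunction over the formulas making up the type $\sigma$, while the outer disjunction ranges over the chosen approximations.

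The main obstacle is the extraction step in (i). The compact tuples $C_\alpha$ and $\tilde C_{\alpha,\beta}$ live in the potentially uncountable space $K(S)^{|X|}$, so Ramsey at $\kappa$ cannot be applied to their values directly. Collapsing them to the single pair $(C,\tilde C)$ required by Definition~\ref{def_stable} is where the large-cardinal hypothesis on $\kappa$, the Vietoris compactness of $K(S)^{|X|}$, and the neighborhood machinery of Facts~\ref{fact_trivial} and~\ref{fact_otto} all come together; this is the point where the Erd\H{o}s-Rado bookkeeping referenced in~\cite{TZ}*{Appendix C.3} would substitute for the Ramsey cardinal if one wishes to avoid large cardinals.
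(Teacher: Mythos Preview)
Your step (i)~---~that stability of $\sigma$ plus approximability of ${\EuScript D}$ yields approximability of ${\EuScript D}$ \textit{from below} by $\sigma$ itself~---~is false, and your argument for it has a real gap. For a counterexample take the classical situation $|X|=0$, let $\sigma(x\,;z)$ be the formula $x=z$ in an infinite pure set, and let ${\EuScript D}=\varnothing$. The formula is stable and ${\EuScript D}$ is approximable (for small $B$ pick any $a\notin B$), yet no $a$ satisfies $\sigma(a\,;{\EuScript U})=\{a\}\subseteq\varnothing$, so approximability from below fails. Your Ramsey dichotomy breaks precisely here: on the colour class where $\neg\sigma(a_\beta\,;b_\alpha\,;C_\alpha)$ holds for all $\beta<\alpha$, you end up with ${\sim}\sigma$ on \textit{both} off-diagonal triangles and $\sigma$ only on the diagonal, which is not the configuration of Definition~\ref{def_stable}. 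In the $x=z$ example the construction produces $a_\alpha=b_\alpha$ all distinct, and indeed $a_\beta\neq b_\alpha$ whenever $\beta\neq\alpha$.

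The paper circumvents this by interposing a conjunction. One first proves that some $\pi(\bar x\,;z\,;X)=\bigwedge_{j<\lambda}\sigma(x_j\,;z\,;X)$ approximates ${\EuScript D}$ from below; your recursive construction \textit{does} work for this, because the overshoot $b_n$ is now chosen for the whole conjunction $\bigwedge_{i\le n}\sigma(a_i\,;\cdot\,;\cdot)$, which forces $\sigma(a_i\,;b_n\,;C_n)$ for every $i\le n$ and supplies the missing $\sigma$-triangle. This is the true origin of the inner $\bigwedge_{j}$ in the theorem~---~it ranges over distinct parameters $a_{i,j}$, not over the formulas constituting the type $\sigma$ as you suggest in step (ii). A separate short lemma shows $\pi$ remains stable, and only then does one run an argument in the spirit of your step (ii), applied to $\pi$; but stability is needed \textit{again} there (Lemma~\ref{lem_1_inf}) to cut the disjunction down to fewer than $\kappa$ terms. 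Your union over all singletons $B=\{b\}$ gives $\kappa$ disjuncts, which is too many.
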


\begin{proof}
  The theorem is an immediate consequence of the following three lemmas.
\end{proof}

\begin{lemma}\label{lem_1_inf}
  Let $\sigma(x\,;z\,;X)$ be stable.
  Assume that ${\EuScript D}$ is approximable by $\sigma(x\,;z\,;X)$ from below.
  Then there are some $\langle a_i\ :\ i<\lambda\rangle$ such that for every $b\in{\EuScript U}^z$ and every $C$\medskip

  \ceq{1.\hfill \langle b,C\rangle\in{\EuScript D}}{\Rightarrow}{\bigvee_{i<\lambda}\ \sigma(a_i\,;b\,;C)}\medskip 

  \ceq{2.\hfill \langle b,C\rangle\in{\EuScript D}}{\Leftarrow}{\bigvee_{i<\lambda}\ \sigma(a_i\,;b\,;C).} 
\end{lemma}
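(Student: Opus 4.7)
The plan is to construct the sequence $\langle a_i : i < \lambda \rangle$ by a greedy induction, using the stability hypothesis to bound its length below $\kappa$. Condition (2) will come for free: by approximability from below, each individual $a_i$ already satisfies $\sigma(a_i;b;C) \Rightarrow \langle b,C\rangle\in{\EuScript D}$ for every $b\in{\EuScript U}^z$ and every $C$, so the same implication holds for the disjunction $\bigvee_{i<\lambda}\sigma(a_i;b;C)$.

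For condition (1), I induct on $\gamma<\kappa$. Having chosen $\langle a_i,b_i,C_i : i<\gamma\rangle$, I ask whether these $a_i$'s already witness (1). If yes, I set $\lambda=\gamma$ and stop. Otherwise I pick a witness $\langle b_\gamma,C_\gamma\rangle\in{\EuScript D}$ for which $\neg\sigma(a_i;b_\gamma;C_\gamma)$ holds for every $i<\gamma$, and apply approximability from below to the small set $B_\gamma=\{b_i : i\le\gamma\}$ to obtain $a_\gamma\in{\EuScript U}^x$ such that $\sigma(a_\gamma;b_i;C)$ holds whenever $\langle b_i,C\rangle\in{\EuScript D}$, for every $i\le\gamma$. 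In particular $\sigma(a_n;b_i;C_i)$ holds for all $i\le n$ in the sequence.

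Assuming the construction runs all the way through $\kappa$, I aim for a contradiction with stability. By construction, for every $i<n<\kappa$ both $\sigma(a_n;b_i;C_i)$ and $\neg\sigma(a_i;b_n;C_n)$ hold. The main obstacle is that Definition~\ref{def_stable} requires single tuples $C$ and $\tilde C$, whereas here the $C_i$'s vary with $i$ and the negative clause is of the form $\neg\sigma$ rather than ${\sim}\sigma$. I handle the first issue by cardinality: since $X$ is a finite tuple of variables and ${\EuScript L}$ has a symbol for each element of $K(S)$, the range $K(S)^{|X|}$ of the map $i\mapsto C_i$ has cardinality strictly below $\kappa$, so a pigeonhole argument yields a cofinal $J_0\subseteq\kappa$ of size $\kappa$ on which $C_i=C$ is constant.

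For the second issue I invoke ${\EuScript F}$-saturation: for each pair $i<n$ in $J_0$, $\neg\sigma(a_i;b_n;C)$ holds, so by Facts~\ref{fact_otto} and~\ref{fact_saturation} there is $\tilde C_{i,n}$ disjoint from $C$ with ${\sim}\sigma(a_i;b_n;\tilde C_{i,n})$. The assignment $\{i,n\}\mapsto\tilde C_{i,n}$ is a $2$-coloring of $[J_0]^2$ with fewer than $\kappa$ colors; since $\kappa$ is Ramsey, there is a monochromatic $J_1\subseteq J_0$ of cardinality $\kappa$ on which $\tilde C_{i,n}=\tilde C$ is constant. After reindexing $J_1$ as $\kappa$, the sequence $\langle a_i,b_i : i<\kappa\rangle$ together with $C$ and $\tilde C$ witnesses instability of $\sigma(x;z;X)$, contradicting the hypothesis. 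Hence the induction terminates at some $\lambda<\kappa$.
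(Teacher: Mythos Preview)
Your proof is correct and follows the same greedy-induction strategy as the paper's. The only cosmetic differences are that you pick $b_\gamma$ before $a_\gamma$ at each stage (the paper reverses this order) and that you split the Ramsey argument into a pigeonhole on $i\mapsto C_i$ followed by Ramsey on the $\tilde C_{i,n}$, whereas the paper performs a single Ramsey step coloring each pair $\{i,n\}$ by $\langle C_i,\tilde C_{i,n}\rangle$.
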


\begin{proof}
  We define recursively the required parameters $a_i$ together with some auxiliary parameters $b_i$.
  The element $a_n$ is chosen so that

\ceq{3.\hfill \langle b,C\rangle\in{\EuScript D}}{\Leftarrow}{\sigma(a_n\,;b\,;C)}\hfill for every $b\in{\EuScript U}^z$ and every $C$

and

\ceq{4.\hfill \langle b_i,C\rangle\in{\EuScript D}}{\Rightarrow}{\sigma(a_n\,;b_i\,;C)}\hfill for every $i<n$ and every $C$.\smallskip

This is possible because ${\EuScript D}$ is approximated from below.
Note that (3) immediately guarantees (2).
Now, assume (1) fails for $\lambda=n+1$, and choose $b_n$ and $C_n$ witnessing this.
Then, by Fact~\ref{fact_otto}, for some $\tilde C_{i,n}\cap C_n=\varnothing$

\ceq{5.\hfill \langle b_n,C_n\rangle\in{\EuScript D}\ \ }{\text{and}}{\ \ \bigwedge_{i\le n}\ {\sim}\sigma(a_i\,;b_n\,;\tilde C_{i,n}).}

Suppose for a contradiction that the construction carries on for $\kappa$ stages.
Then, as (5) guarantees that $\langle b_i,C_i\rangle\in{\EuScript D}$ for every $i$, from (4) we obtain $\sigma(a_n\,;b_i\,;C_i)$ for every $i<n$.
From (5) we also obtain ${\sim}\sigma(a_i\,;b_n\,;\tilde C_{i,n})$, for every $i\le n$.
Apply Ramsey with pairs $\langle C,\tilde C\rangle$ of compact subsets of $S$ as colors to obtain a subsequence $\langle a'_i\,;b'_i\ :\ i<\kappa\rangle$ that contradicts Definition~\ref{def_stable}.
\end{proof}

\begin{lemma}
  Let $\sigma(x\,;z\,;X)$ be stable.
  Assume that ${\EuScript D}$ is approximable by $\sigma(x\,;z\,;X)$.
  Then for some $\lambda<\kappa$ the type\medskip 

  \ceq{\hfill\pi(\bar x\,;z\,;X)}{=}{\bigwedge_{i<\lambda}\ \sigma(x_i\,;z\,;X),}\smallskip
  
  where the $x_i$ are copies of $x$ and $\bar x=\langle x_i\ :\ i<\lambda\rangle$,
  approximates ${\EuScript D}$ from below.
\end{lemma}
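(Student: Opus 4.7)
The plan is to proceed by a recursion analogous to the one in Lemma~\ref{lem_1_inf}, but producing a conjunction rather than a disjunction. Given a small $B\subseteq{\EuScript U}^z$, I would build a sequence $\langle a_i, b_i, C_i : i<\lambda\rangle$ as follows: at stage $n$, first check whether $\bar a_{<n}$ already satisfies the implication
$$\bigwedge_{i<n}\sigma(a_i\,;b\,;C)\ \Rightarrow\ \langle b,C\rangle\in{\EuScript D}$$
for every $b$ and every $C$, i.e.\ exactly condition~(2) of Definition~\ref{def_approx_blw} for $\pi$. If so, stop. Otherwise, pick a failure witness $b_n, C_n$ with $\bigwedge_{i<n}\sigma(a_i\,;b_n\,;C_n)$ and $\langle b_n, C_n\rangle\notin{\EuScript D}$, and choose $a_n$ to approximate ${\EuScript D}$ on the small set $B\cup\{b_0,\dots,b_n\}$, which is possible by the hypothesis that ${\EuScript D}$ is approximable by $\sigma$. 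With this choice, condition~(1) of Definition~\ref{def_approx_blw} on $B$ holds automatically at termination: every $a_i$ approximates ${\EuScript D}$ on a superset of $B$, so each conjunct $\sigma(a_i\,;b\,;C)$ is forced whenever $b\in B$ and $\langle b,C\rangle\in{\EuScript D}$.

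Everything thus reduces to showing that the construction must terminate at some $\lambda<\kappa$. Assuming for contradiction it runs for $\kappa$ stages, then for every pair $i<n$ the failure witness gives $\sigma(a_i\,;b_n\,;C_n)$, and the choice of $a_n$ gives $\neg\sigma(a_n\,;b_i\,;C_i)$ because $\langle b_i, C_i\rangle\notin{\EuScript D}$ and $b_i$ lies in the set on which $a_n$ approximates. Applying Fact~\ref{fact_otto} to the latter yields $\tilde C_{n,i}\cap C_i=\varnothing$ with ${\sim}\sigma(a_n\,;b_i\,;\tilde C_{n,i})$. Colouring each pair $\{i,n\}$ by $\langle C_n,\tilde C_{n,i}\rangle$ uses fewer than $\kappa$ colours because $\kappa$ exceeds the cardinality of the language; the Ramsey property of $\kappa$ then produces a monochromatic $\kappa$-subsequence with constant values $C^*$ and $\tilde C^*$, and $\tilde C^*\cap C^*=\varnothing$ follows from $\tilde C_{n,i}\cap C_i=\varnothing$. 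On this subsequence one has $\sigma(a_i\,;b_n\,;C^*)\wedge{\sim}\sigma(a_n\,;b_i\,;\tilde C^*)$ for every $i<n$, which is the order property of Definition~\ref{def_stable} in its second form (with the roles of $i$ and $n$ interchanged), contradicting the stability of $\sigma$.

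The delicate point, just as in Lemma~\ref{lem_1_inf}, is the Ramsey extraction: one must check that the colour $\tilde C_{n,i}$ depends only on the pair $\{i,n\}$ in a way compatible with the argument, that the palette size stays below $\kappa$, and that the extracted pattern really fits one of the two clauses of Definition~\ref{def_stable} with $\tilde C^*\cap C^*=\varnothing$. A secondary bookkeeping issue is producing a single $\lambda$ that works for every small $B$; since padding $\bar a$ with repeated entries preserves both conditions of Definition~\ref{def_approx_blw}, any common upper bound $\lambda$ suffices, and one below $\kappa$ exists because the tree of potential constructions is well-founded of rank $<\kappa$ by stability.
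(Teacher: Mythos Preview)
Your argument is essentially the paper's: recursively pick $a_n$ by approximability and $b_n,C_n$ as a witness that condition~(2) of Definition~\ref{def_approx_blw} fails, extract $\tilde C_{n,i}$ via Fact~\ref{fact_otto}, then colour pairs by $\langle C,\tilde C\rangle$ and apply Ramsey to contradict Definition~\ref{def_stable}. The paper absorbs your ``secondary bookkeeping'' paragraph by negating the lemma at the outset and fixing a single $B$ witnessing the failure of $\pi$, so that the construction simply never halts; it also chooses $a_n$ before $b_n$ (so the failure witness is for $\bigwedge_{i\le n}$ rather than $\bigwedge_{i<n}$), but this reversal is cosmetic.
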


\begin{proof}
  Negate the claim and let $B$ witness that $\pi(\bar x)$ does not approximate ${\EuScript D}$ from below.
  Suppose that $\langle a_i:i<n\rangle$ and $\langle b_i:i<n\rangle$ have been defined.
  Choose $a_n$ such that for every $b\in B\cup\{b_i:i<n\}$ and every $C$

  \ceq{1.\hfill\langle b,C\rangle\in{\EuScript D}}{\Rightarrow}{\sigma(a_n\,;b\,;C)}\quad and

  \ceq{2.\hfill\langle b,C\rangle\in{\EuScript D}}{\Leftarrow}{\sigma(a_n\,;b\,;C).}
  
  Note that the latter implication is equivalent to: for every $C$ there is some  $\tilde C\cap C=\varnothing$ such that 
  
  \ceq{3.\hfill \langle b,C\rangle\notin{\EuScript D}}{\Rightarrow}{{\sim}\sigma(a_n\,;b\,;\tilde C)}.

  Now, as the lemma is assumed to fail, we can choose $b_n$ and $C_n$ such that

  \ceq{4.\hfill \langle b_n,C_n\rangle\notin{\EuScript D}\ \ }{\text{and}}{\ \ \bigwedge_{i=0}^n\ \sigma(a_i\,;b_n\,;C_n).}

  Note that (4) ensures that $\langle b_i,C_i\rangle\notin{\EuScript D}$ for every $i$.
  Then there is some $\tilde C_{i,n}$ that witnesses (3) for $\langle b_i,C_i\rangle\notin{\EuScript D}$.
  We claim that the procedure has to stop after $<\kappa$ steps.
  In fact, from (3) we obtain ${\sim}\sigma(a_n\,;b_i\,;\tilde C_{i,n})$ for every $i<n$.
  On the other hand, by (4) we have that $\sigma(a_i\,;b_n\,;C_n)$ for every $i<n$.
  Again, apply Ramsey with pairs $\langle C,\tilde C\rangle$ of compact subsets of $S$ as colors to obtain a subsequence $\langle a'_i\,;b'_i\ :\ i<\kappa\rangle$ that contradicts Definition~\ref{def_stable}.
\end{proof}

\begin{lemma}\label{lem_sigma_stable}
  If $\sigma(x\,;z\,;X)$ is stable, then $\pi(\bar x\,;z\,;X)$ in the previous lemma is stable.
\end{lemma}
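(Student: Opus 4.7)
The plan is to prove the contrapositive: if $\pi(\bar x\,;z\,;X)$ is unstable, then $\sigma(x\,;z\,;X)$ is unstable. The reduction is a single pigeonhole on coordinates, analogous in spirit to the Ramsey applications in the previous two lemmas, but much simpler because no recursive construction is involved.

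Suppose a sequence $\langle\bar a_i\,;b_i\ :\ i<\kappa\rangle$, a tuple $C$, and $\tilde C\cap C=\varnothing$ witness the instability of $\pi$, so that (say) $\pi(\bar a_n\,;b_i\,;C)\wedge{\sim}\pi(\bar a_i\,;b_n\,;\tilde C)$ holds for every $i<n<\kappa$; the other direction in Definition~\ref{def_stable} is symmetric. Write $\bar a_i=\langle a_{i,j}\ :\ j<\lambda\rangle$. By the definition of $\pi$ as the conjunction $\bigwedge_{j<\lambda}\sigma(x_j\,;z\,;X)$, the first conjunct gives $\sigma(a_{n,j}\,;b_i\,;C)$ for every $j<\lambda$. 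By the convention that ${\sim}$ applied to a type is the disjunction of the pseudonegations of its members, the second conjunct means $\bigvee_{j<\lambda}{\sim}\sigma(a_{i,j}\,;b_n\,;\tilde C)$; choose for each pair $i<n$ a color $j(i,n)<\lambda$ realizing the disjunction.

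Now apply the Ramsey property. Since $\pi$ is small we have $\lambda<\kappa$, and $\kappa$ is Ramsey, so the coloring $j\colon[\kappa]^2\to\lambda$ has a homogeneous set $I\subseteq\kappa$ of cardinality $\kappa$ of some constant color $j^*$. Reindex $I$ as $\kappa$ and set $a'_i:=a_{i,j^*}$. For every $i<n$ in the reindexed sequence, $\sigma(a'_n\,;b_i\,;C)$ holds by the first conjunct applied at $j=j^*$, and ${\sim}\sigma(a'_i\,;b_n\,;\tilde C)$ holds by the choice of $j^*$. Thus $\langle a'_i\,;b_i\ :\ i<\kappa\rangle$ together with $C$ and $\tilde C$ witnesses instability of $\sigma$, contradicting the hypothesis.

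The only step that needs explicit care is the unpacking of ${\sim}\pi$ as an infinite disjunction of the ${\sim}\sigma(x_j\,;z\,;X)$, which is merely the stated convention for pseudonegation of a type; everything else is a one-shot pigeonhole on a single coordinate, with no recursion and no saturation involved.
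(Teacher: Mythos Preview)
Your proof is correct and follows essentially the same approach as the paper's: both argue the contrapositive, color pairs $i<n$ by an index $j<\lambda$ witnessing the disjunction ${\sim}\pi=\bigvee_{j<\lambda}{\sim}\sigma(x_j\,;z\,;X)$, and apply the Ramsey property of $\kappa$ to extract a monochromatic subsequence that witnesses instability of $\sigma$. Your version is slightly more explicit about the first conjunct and about why $\lambda<\kappa$, but the argument is identical.
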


\begin{proof}
  Suppose $\pi(\bar x\,;z\,;X)$ is unstable.
  Let $\langle \bar a_i\,;b_i\ :\ i<\kappa\rangle$ be a sequence witnessing instability.
  Suppose ${\sim}\pi(\bar a_i\,;b_n\,;\tilde C)$ holds for every $i<n<\kappa$ (the case $n<i<\kappa$ is identical).
  For every pair $i<n$ we can pick some $j<\lambda$ such that ${\sim}\sigma(a_{i,j}\,;b_n\,;\tilde C)$.
  Interpret $j$ as a color. 
  As $\kappa$ is a Ramsey cardinal, every $\lambda$-coloring of a complete graph of size $\kappa$ has a monochromatic subgraph of size $\kappa$.
  That is, there is some fixed $j<\lambda$ such that ${\sim}\sigma(a_{i,j}\,;b_n\,;\tilde C)$ holds for every pair $i<n$ restricted to range over a subsequence of length $\kappa$.
  Such a subsequence contradicts the stability of $\sigma(x\,;z\,;X)$.
\end{proof}

\section{Yet another version of stability}\label{epsilonstable}
\def\medrel#1{\parbox{5ex}{\hfil $#1$}}
\def\ceq#1#2#3{\parbox[t]{28ex}{$\displaystyle #1$}\medrel{#2}{$\displaystyle #3$}}

In this section we present the notion of $\varepsilon$-stability and prove a version of Theorem~\ref{thm_stability_definability} which, albeit approximate,  only requires finite disjunctions and conjunctions.

In this section, the letter $\varepsilon$ always denotes a compact symmetric neighborhood of the diagonal of $S^2$.
If $C$ is a subset of $S$, we write $C^\varepsilon$ for the set of points that are $\varepsilon$-close to points in $C$.
That is

\ceq{\hfill C^\varepsilon}{=}{\big\{\alpha\in S\ :\ \langle\xi,\alpha\rangle\in \varepsilon\ \text{ for some }\ \xi\in C\big\}.}

When $C$ is a tuple, the definition above applies componentwise.

\begin{definition}\label{def_epsilon_stable}\strut
  Let $\varphi(x\,;z\,;X)$, for $x$ and $z$ tuples of variables of sort ${\sf H}$, be an ${\EuScript F}_{\sf X}$-for\-mu\-la.
  We say that $\varphi(x\,;z\,;X)$ is \emph{$\varepsilon$-unstable\/} if for every $m<\omega$ there is a sequence $\langle a_i\,;b_i\,;C_i\,;\tilde C_i\ :\ i<m\rangle$ such that for every $i<n<m$\smallskip

    \ceq{\hfill \varphi(a_n\,;b_i\,;C_i)}{\wedge}{{\sim}\varphi(a_i\,;b_n\,;\tilde C_n)}\quad and\quad $C_i^\varepsilon\cap\tilde C_i=\varnothing$.\smallskip

  A formula is \emph{$\varepsilon$-stable\/} if it is not $\varepsilon$-unstable.  
\end{definition}

Note that the requirement of $\varepsilon$-stability is stronger the smaller the $\varepsilon$.
It is also easy to see that $\varepsilon$-stable for every positive $\varepsilon$ implies stable.

We state the main theorem of this section, which is proved along the same lines as Theorem~\ref{thm_stability_definability}.

\begin{theorem}\label{thm_epsilon_stability_definability}
  Let $\varphi(x\,;z\,;X)$ be $\varepsilon$-stable.
  Assume that ${\EuScript D}$ is approximable by $\varphi(x\,;z\,;X)$.
  Then there are $k,m<\omega$ and $\langle a_{i,j}\ :\ i< k,\ j<m\rangle$ such that for every $b\in{\EuScript U}^z$ and every $C$\medskip

  \ceq{1.\hfill \langle b,C\rangle\in{\EuScript D}}{\Rightarrow}{\bigvee_{i< k}\ \bigwedge_{j<m}\ \varphi(a_{i,j}\,;b\,;C^\varepsilon)}\medskip

  \ceq{2.\hfill \langle b,C^\varepsilon\rangle\in{\EuScript D}}{\Leftarrow}{\bigvee_{i< k}\ \bigwedge_{j<m}\ \varphi(a_{i,j}\,;b\,;C).}
\end{theorem}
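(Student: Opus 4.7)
The plan is to imitate the three-lemma decomposition that gave Theorem~\ref{thm_stability_definability}, with two adaptations: every occurrence of a compact parameter $C$ on the ${\EuScript D}$-side is enlarged to $C^\varepsilon$, and the appeal to the Ramsey cardinal $\kappa$ is replaced throughout by (i) the observation that in Definition~\ref{def_epsilon_stable} the witnesses $C_i,\tilde C_i$ are \emph{allowed} to vary with $i$, so that no Ramsey extraction is needed to stabilize them, and (ii) a use of finite Ramsey in the analogue of Lemma~\ref{lem_sigma_stable}. This is exactly what yields the finite bounds $k,m<\omega$ claimed in the statement.

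First I would formulate an $\varepsilon$-version of ``approximable from below'': ${\EuScript D}$ is $\varepsilon$-approximable from below by $\varphi(x\,;z\,;X)$ if for every small $B\subseteq{\EuScript U}^z$ there is $a\in{\EuScript U}^x$ with $\langle b,C\rangle\in{\EuScript D}\Rightarrow \varphi(a\,;b\,;C^\varepsilon)$ for every $b\in B$ and every $C$, and $\langle b,C^\varepsilon\rangle\in{\EuScript D}\Leftarrow \varphi(a\,;b\,;C)$ for every $b\in{\EuScript U}^z$ and every $C$. The $\varepsilon$-analogue of Lemma~\ref{lem_1_inf} then asserts that, under $\varepsilon$-stability plus $\varepsilon$-approximability from below, there are finitely many $a_i$ ($i<k$) satisfying the corresponding finite-disjunction versions of (1) and (2). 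The construction is the one in Lemma~\ref{lem_1_inf}: at stage $n$ pick $a_n$ from $\varepsilon$-approximability-from-below applied to $\{b_i:i<n\}$; if (1) has failed, choose $b_n,C_n$ witnessing this, and apply Fact~\ref{fact_otto} to obtain compact tuples $\tilde C_{i,n}$ disjoint from $C_n^\varepsilon$ with ${\sim}\varphi(a_i\,;b_n\,;\tilde C_{i,n})$. Setting $\tilde C_n=\bigcup_{i<n}\tilde C_{i,n}$ componentwise keeps $\tilde C_n\cap C_n^\varepsilon=\varnothing$ and preserves ${\sim}\varphi(a_i\,;b_n\,;\tilde C_n)$ by monotonicity (Fact~\ref{fact_trivial}), so the sequence $\langle a_i\,;b_i\,;C_i^\varepsilon\,;\tilde C_i:i<?\rangle$ fits Definition~\ref{def_epsilon_stable} and must terminate at some $k<\omega$ by $\varepsilon$-stability.

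Next, the $\varepsilon$-analogue of the second lemma of Section~\ref{nonfinitary}: if ${\EuScript D}$ is approximable by $\varphi$ in the sense of Definition~\ref{def_approx_X} and $\varphi$ is $\varepsilon$-stable, then for some $m<\omega$ the type $\pi_m(\bar x\,;z\,;X)=\bigwedge_{j<m}\varphi(x_j\,;z\,;X)$, with $\bar x=\langle x_j:j<m\rangle$, $\varepsilon$-approximates ${\EuScript D}$ from below. The inductive construction mirrors the one there (using approximability to secure implication (1), and failure of the converse to feed the next stage via Fact~\ref{fact_otto}), and the sequence produced by a non-halting construction again witnesses $\varepsilon$-instability of $\varphi$, so that the procedure must stop at some $m<\omega$.

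The third and final step is to show that $\pi_m$ inherits $\varepsilon$-stability from $\varphi$. Here finite Ramsey replaces the Ramsey cardinal: if $\pi_m$ were $\varepsilon$-unstable then, for any desired length, we could find a witness $\langle \bar a_i\,;b_i\,;C_i\,;\tilde C_i\rangle$; for each pair $i<n$ the formula ${\sim}\pi_m(\bar a_i\,;b_n\,;\tilde C_n)=\bigvee_{j<m}{\sim}\varphi(a_{i,j}\,;b_n\,;\tilde C_n)$ selects some color $j(i,n)<m$, and finite Ramsey yields monochromatic subwitnesses of arbitrary prescribed finite length. On a monochromatic subsequence with color $j$, the data $\langle a_{i,j}\,;b_i\,;C_i\,;\tilde C_i\rangle$ witnesses $\varepsilon$-instability of $\varphi$, contradicting its $\varepsilon$-stability. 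Combining the three steps, applying the finite-disjunction conclusion of the first lemma to $\pi_m$ in place of $\varphi$ produces the desired $\langle a_{i,j}:i<k,j<m\rangle$. The main obstacle is the bookkeeping of indices in Definition~\ref{def_epsilon_stable} — the asymmetric pairing of $C_i$ with $\varphi(a_n\,;b_i\,;C_i)$ and $\tilde C_n$ with ${\sim}\varphi(a_i\,;b_n\,;\tilde C_n)$ linked by $C_i^\varepsilon\cap\tilde C_i=\varnothing$ — which must be respected by each aggregation step (notably the union defining $\tilde C_n$) and by the Ramsey extraction.
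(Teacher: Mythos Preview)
Your three-lemma strategy is exactly the one the paper uses, including the replacement of the Ramsey cardinal by finite Ramsey in the last lemma. There is, however, a small but genuine slip in your formulation of ``$\varepsilon$-approximable from below'': you require only $\langle b,C\rangle\in{\EuScript D}\Rightarrow\varphi(a\,;b\,;C^\varepsilon)$ on $B$, whereas the paper's Definition~\ref{def_e_approx_blw} asks for the sharper $\langle b,C\rangle\in{\EuScript D}\Rightarrow\varphi(a\,;b\,;C)$. With your weaker version, the recursive construction in the first lemma yields only $\varphi(a_n\,;b_i\,;C_i^\varepsilon)$, so the candidate witness is $\langle a_i\,;b_i\,;C_i^\varepsilon\,;\tilde C_i\rangle$ as you write; but then Definition~\ref{def_epsilon_stable} demands $(C_i^\varepsilon)^\varepsilon\cap\tilde C_i=\varnothing$, and you only have $C_i^\varepsilon\cap\tilde C_i=\varnothing$. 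So the sequence does not fit, and the termination argument breaks.

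The fix is painless: drop the extra $\varepsilon$ in clause~(1) of your definition. The second lemma still delivers this stronger property, because approximability in the sense of Definition~\ref{def_approx_X} gives the exact equivalence $\langle b,C\rangle\in{\EuScript D}\Leftrightarrow\varphi(a\,;b\,;C)$ on $B$, so each conjunct of $\pi_m$ (hence $\pi_m$ itself) satisfies $\langle b,C\rangle\in{\EuScript D}\Rightarrow\varphi(a_j\,;b\,;C)$ without any enlargement. One further bookkeeping point you flag but do not resolve: in the second lemma the roles of $i$ and $n$ come out reversed relative to Definition~\ref{def_epsilon_stable} (you obtain $\varphi(a_i\,;b_n\,;C_n)$ and ${\sim}\varphi(a_n\,;b_i\,;\tilde C_i)$), so the contradicting sequence is the \textit{reverse} $\langle a_{m-i}\,;b_{m-i}\,;C_{m-i}\,;\tilde C_{m-i}\rangle$; this is how the paper handles it.
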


\begin{proof}
  The theorem is an immediate consequence of the following three lemmas.
\end{proof}

We need the following version of Definition~\ref{def_approx_blw}.

\begin{definition}\label{def_e_approx_blw}\strut
  We say that ${\EuScript D}$ is \emph{approximable\/} by $\varphi(x\,;z\,;X)$ \emph{from $\varepsilon$-below\/} if for every finite $B\subseteq{\EuScript U}^z$ there is an $a\in{\EuScript U}^x$ such that for every $C$\smallskip

  \ceq{1.\hfill \langle b,C\rangle\in{\EuScript D}}{\Rightarrow}{\varphi(a\,;b\,;C)}\hfill for every $b\in B$ and\smallskip

  \ceq{2.\hfill\langle b,C^\varepsilon\rangle\in {\EuScript D}}{\Leftarrow}{\varphi(a\,;b\,;C)}\hfill for every $b\in{\EuScript U}^z$.\smallskip
\end{definition}

\begin{lemma}
  Let $\varphi(x\,;z\,;X)$ be $\varepsilon$-stable.
  Assume that ${\EuScript D}$ is approximable by $\varphi(x\,;z\,;X)$ from $\varepsilon$-below.
  Then there are $k<\omega$ and $\langle a_i\ :\ i<k\rangle$ such that for every $b\in{\EuScript U}^z$ and every $C$\medskip

  \ceq{1.\hfill \langle b,C\rangle\in{\EuScript D}}{\Rightarrow}{\bigvee_{i<k}\ \varphi(a_i\,;b\,;C^\varepsilon)}\medskip 

  \ceq{2.\hfill \langle b,C^\varepsilon\rangle\in{\EuScript D}}{\Leftarrow}{\bigvee_{i<k}\ \varphi(a_i\,;b\,;C).} 
\end{lemma}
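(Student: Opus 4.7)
The plan is to mimic the recursive construction in Lemma~\ref{lem_1_inf}, but working with Definition~\ref{def_e_approx_blw} ($\varepsilon$-below approximability) in place of plain approximability from below. The key feature of the finitary $\varepsilon$-setting is that $\varepsilon$-instability requires witnesses of every finite length $m<\omega$, so the analogous recursion is forced to terminate after finitely many stages; that stage provides the desired $k$.

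Concretely, I would recursively build $a_n,b_n,C_n,\tilde C_n$ as follows. At stage $n$, with $b_0,\dots,b_{n-1}$ already in hand, apply Definition~\ref{def_e_approx_blw} to the finite set $B=\{b_i:i<n\}$ to obtain $a_n$ satisfying
\[
\text{(3)}\quad\varphi(a_n;b;C)\ \Rightarrow\ \langle b,C^\varepsilon\rangle\in{\EuScript D}\qquad\text{for every }b,C,
\]
\[
\text{(4)}\quad\langle b_i,C\rangle\in{\EuScript D}\ \Rightarrow\ \varphi(a_n;b_i;C)\qquad\text{for every }i<n\text{ and every }C.
\]
Clause (3) already secures implication (2) of the lemma irrespective of how many $a_i$ have been accumulated. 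If implication (1) fails for the current $a_0,\dots,a_n$, pick $b_n,C_n$ witnessing it, i.e.\ $\langle b_n,C_n\rangle\in{\EuScript D}$ together with $\neg\varphi(a_i;b_n;C_n^\varepsilon)$ for every $i\le n$; otherwise halt and set $k=n+1$.

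The step requiring a little care is producing a single $\tilde C_n$ (rather than a family $\tilde C_{i,n}$) that conforms to Definition~\ref{def_epsilon_stable}. By Fact~\ref{fact_otto} each $\neg\varphi(a_i;b_n;C_n^\varepsilon)$ yields some $\tilde C_{i,n}$ disjoint from $C_n^\varepsilon$ with ${\sim}\varphi(a_i;b_n;\tilde C_{i,n})$; the componentwise union $\tilde C_n=\bigcup_{i\le n}\tilde C_{i,n}$ remains disjoint from $C_n^\varepsilon$, and monotonicity (Fact~\ref{fact_trivial}) promotes each ${\sim}\varphi(a_i;b_n;\tilde C_{i,n})$ to ${\sim}\varphi(a_i;b_n;\tilde C_n)$.

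Combining (4) evaluated at the pairs $\langle b_i,C_i\rangle\in{\EuScript D}$ (available because the failure of (1) at step $i$ provided $\langle b_i,C_i\rangle\in{\EuScript D}$) with the previous paragraph gives, at every length $m$ reached by the recursion, a sequence $\langle a_i;b_i;C_i;\tilde C_i:i<m\rangle$ with $\varphi(a_n;b_i;C_i)\wedge{\sim}\varphi(a_i;b_n;\tilde C_n)$ and $C_i^\varepsilon\cap\tilde C_i=\varnothing$ for every $i<n<m$, which witnesses $\varepsilon$-instability of $\varphi$ in the sense of Definition~\ref{def_epsilon_stable}. Since $\varphi$ is $\varepsilon$-stable, the recursion must halt at some finite $k$ at which (1) holds; as (2) has held throughout, this proves the lemma.
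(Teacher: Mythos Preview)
Your proposal is correct and follows essentially the same approach as the paper's proof: the same recursive construction via Definition~\ref{def_e_approx_blw}, the same use of Fact~\ref{fact_otto} to extract the $\tilde C_{i,n}$, and the same finite-union trick with monotonicity to produce a single $\tilde C_n$ disjoint from $C_n^\varepsilon$. The only cosmetic difference is that you form $\tilde C_n=\bigcup_{i\le n}\tilde C_{i,n}$ at each stage, whereas the paper forms it once after assuming the construction has run for $m$ stages; either way yields the same witness of $\varepsilon$-instability contradicting Definition~\ref{def_epsilon_stable}.
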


The proof is entirely similar to that of Lemma~\ref{lem_1_inf}.
It is nevertheless included to show the role of $C^\varepsilon$.

\begin{proof}
  We define recursively the required parameters $a_i$ together with some auxiliary parameters $b_i$.
  The element $a_n$ is chosen so that

\ceq{3.\hfill \langle b,C^\varepsilon\rangle\in{\EuScript D}}{\Leftarrow}{\varphi(a_n\,;b\,;C)}\hfill for every $b\in{\EuScript U}^z$ and every $C$

and

\ceq{4.\hfill \langle b_i,C\rangle\in{\EuScript D}}{\Rightarrow}{\varphi(a_n\,;b_i\,;C)}\hfill for every $i<n$ and every $C$.\smallskip

This is possible because ${\EuScript D}$ is approximated from $\varepsilon$-below.
Note that (3) immediately guarantees (2).
Now, assume (1) fails for $k=n+1$, and choose $b_n$ and $C_n$ witnessing this.
Then, by Fact~\ref{fact_otto}, for some $\tilde C_{i,n}\cap C_n^\varepsilon=\varnothing$

\ceq{5.\hfill \langle b_n,C_n\rangle\in{\EuScript D}\ \ }{\text{and}}{\ \ \bigwedge_{i\le n}\ {\sim}\varphi(a_i\,;b_n\,;\tilde C_{i,n}).}

Let $m$ witness the $\varepsilon$-stability of $\varphi(x\,;z\,;X)$.
Suppose for a contradiction that the construction carries on for $m$ stages.
Then, as (5) guarantees that $\langle b_i,C_i\rangle\in{\EuScript D}$ for every $i$, from (4) we obtain $\varphi(a_n\,;b_i\,;C_i)$ for every $i<n<m$.
From (5) we also obtain, for every $i< n<m$, some $\tilde C_{i,n}\cap C_n^\varepsilon=\varnothing$ such that ${\sim}\varphi(a_i\,;b_n\,;\tilde C_{i,n})$.
By monotonicity, ${\sim}\varphi(a_i\,;b_n\,;\tilde C_n)$ for $\tilde C_n=\bigcup_{i< m}\tilde C_{i,n}$.
This contradicts our choice of $m$.
\end{proof}

\begin{lemma}
  Let $\varphi(x\,;z\,;X)$ be $\varepsilon$-stable.
  Assume that ${\EuScript D}$ is approximable by $\varphi(x\,;z\,;X)$.
  Let $m$ be maximal such that a sequence as in Definition~\ref{def_epsilon_stable} exists for $\varphi(x\,;z\,;X)$.
  Write $\bar x$ for $\langle x_i\ :\ i<m\rangle$, where the $x_i$ are copies of $x$.
  Then the formula\smallskip

  \ceq{\hfill\sigma(\bar x\,;z\,;X)}{=}{\bigwedge_{i<m}\ \varphi(x_i\,;z\,;X)}\smallskip

  approximates ${\EuScript D}$ from $\varepsilon$-below.
\end{lemma}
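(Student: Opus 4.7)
The plan is to argue by contradiction: suppose $\sigma$ does not approximate ${\EuScript D}$ from $\varepsilon$-below and fix a finite $B\subseteq{\EuScript U}^z$ witnessing this, then build an $\varepsilon$-instability sequence of length $m+1$ for $\varphi(x\,;z\,;X)$, contradicting the maximality of $m$.

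The sequence $\langle a_n\,;b_n\,;C_n\,;\tilde C_n\ :\ n\le m\rangle$ will be assembled in two passes. In the first pass, for $n=0,1,\dots,m$, I will use the approximability of ${\EuScript D}$ by $\varphi(x\,;z\,;X)$ on the finite set $B\cup\{b_j\,:\,j<n\}$ to choose $a_n$ so that $\varphi(a_n\,;b\,;C)\Leftrightarrow\langle b,C\rangle\in{\EuScript D}$ for every such $b$ and every $C$. Then I assemble the available $a_j$'s into a length-$m$ tuple $\bar a^{(n)}$, padding any unfilled slots with $a_0$. The conjunction of the approximability equivalences for the $a_j$'s appearing in $\bar a^{(n)}$ makes clause~(1) of Definition~\ref{def_e_approx_blw} hold for $\sigma$ and $\bar a^{(n)}$ on $B$, so the assumed failure forces clause~(2) to fail: there exist $b_n,C_n$ with $\sigma(\bar a^{(n)}\,;b_n\,;C_n)$ and $\langle b_n,C_n^\varepsilon\rangle\notin{\EuScript D}$. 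In particular $\varphi(a_j\,;b_n\,;C_n)$ holds for every $j<n$.

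In the second pass, the pseudonegated companions $\tilde C_n$ are supplied via the later $a_k$'s. For each pair $n<k\le m$ the parameter $b_n$ belongs to the approximability set of $a_k$, so $\langle b_n,C_n^\varepsilon\rangle\notin{\EuScript D}$ translates to $\neg\varphi(a_k\,;b_n\,;C_n^\varepsilon)$. Fact~\ref{fact_otto} then yields a compact $\tilde C_{n,k}$ disjoint from $C_n^\varepsilon$ with ${\sim}\varphi(a_k\,;b_n\,;\tilde C_{n,k})$. Setting $\tilde C_n:=\bigcup_{k=n+1}^{m}\tilde C_{n,k}$ produces a compact set disjoint from $C_n^\varepsilon$ that, by the monotonicity of Fact~\ref{fact_trivial}, satisfies ${\sim}\varphi(a_k\,;b_n\,;\tilde C_n)$ for every $n<k\le m$; at $n=m$ one takes $\tilde C_m=\varnothing$.

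The resulting length-$(m{+}1)$ sequence satisfies, for every $i<n\le m$, the relations $\varphi(a_i\,;b_n\,;C_n)\wedge{\sim}\varphi(a_n\,;b_i\,;\tilde C_i)$ together with $C_n^\varepsilon\cap\tilde C_n=\varnothing$. Reversing the indices $n\mapsto m-n$ realigns this with Definition~\ref{def_epsilon_stable}, giving the desired contradiction. I expect the main obstacle to be exactly this indexing asymmetry: unlike in Lemma~\ref{lem_1_inf}, where failure of clause~(1) simultaneously supplies membership in ${\EuScript D}$ and negated instances of $\varphi$ aligned with Definition~\ref{def_epsilon_stable}, the failure of clause~(2) here yields \textit{positive} instances of $\varphi$, so the pseudonegated data has to be recovered from the approximability of the later $a_k$'s rather than the earlier $a_i$'s, and only after reindexing does the constructed sequence fit the definition.
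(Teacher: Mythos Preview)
Your argument is correct and follows the same route as the paper: choose $a_n$ by approximability on $B\cup\{b_j:j<n\}$, extract $b_n,C_n$ from the failure of clause~(2) for $\sigma$, recover ${\sim}\varphi(a_k\,;b_n\,;\tilde C_{n,k})$ for $k>n$ via Fact~\ref{fact_otto}, take $\tilde C_n$ as the finite union, and reverse the sequence to match Definition~\ref{def_epsilon_stable}. Your explicit padding of $\bar a^{(n)}$ and the choice $\tilde C_m=\varnothing$ make visible two small steps that the paper leaves implicit, but the content is the same.
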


\begin{proof}
  Negate the claim and let $B$ witness that $\sigma(\bar x)$ does not approximate ${\EuScript D}$ from $\varepsilon$-below.
  For $n\le m$ suppose that $a_0,\dots,a_{n-1}$ and $b_0,\dots,b_{n-1}$ have been defined.
  Choose $a_n$ such that for every $b\in B\cup\{b_0,\dots,b_{n-1}\}$ and every $C$

  \ceq{1.\hfill\langle b,C\rangle\in{\EuScript D}}{\Rightarrow}{\varphi(a_n\,;b\,;C)}\quad and

  \ceq{2.\hfill\langle b,C^\varepsilon\rangle\in{\EuScript D}}{\Leftarrow}{\varphi(a_n\,;b\,;C^\varepsilon).}
  
  Note that the second implication is equivalent to: for every $C$ there is some  $\tilde C\cap C^\varepsilon=\varnothing$ such that 
  
  \ceq{3.\hfill \langle b,C^\varepsilon\rangle\notin{\EuScript D}}{\Rightarrow}{{\sim}\varphi(a_n\,;b\,;\tilde C)}.

  Now, as the lemma is assumed to fail, we can choose $b_n$ and $C_n$ such that

  \ceq{4.\hfill \langle b_n,C_n^\varepsilon\rangle\notin{\EuScript D}\ \ }{\text{and}}{\ \ \bigwedge_{i=0}^n\ \varphi(a_i\,;b_n\,;C_n).}

  Note that (4) ensures that $\langle b_i,C_i^\varepsilon\rangle\notin{\EuScript D}$ for every $i$.
  Then there is some $\tilde C_{i,n}\cap C^\varepsilon_i=\varnothing$ that witnesses (3) for $\langle b_i,C_i^\varepsilon\rangle\notin{\EuScript D}$.
  We claim that the procedure has to stop at some stage $<m$. 
  Assume for a contradiction that we can continue till we have $\langle a_i\,;b_i\,;C_i\,;\tilde C_i\ :\ i\le m\rangle$.
  From (3) we obtain ${\sim}\varphi(a_n\,;b_i\,;\tilde C_{i,n})$ for every $i<n\le m$.
  Therefore, by monotonicity, ${\sim}\varphi(a_n\,;b_i\,;\tilde C_i)$ for $\tilde C_i=\bigcup_{n\le m}\tilde C_{i,n}$.
  On the other hand, by (4) we have that $\varphi(a_i\,;b_n\,;C_n)$ for every $i<n\le m$.
  Therefore $\langle a_{m-i}\,;b_{m-i}\,;C_{m-i}\,;\tilde C_{m-i}\ :\ i\le m\rangle$ contradicts the maximality of $m$.
\end{proof}

\begin{lemma}\label{lem_sigma_stable}
  If $\varphi(x\,;z\,;X)$ is $\varepsilon$-stable then $\sigma(\bar x\,;z\,;X)$ in the previous lemma is $\varepsilon$-stable.
\end{lemma}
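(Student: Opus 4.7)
The plan is to mimic the argument of the analogous lemma in the preceding section, but to replace the use of a Ramsey cardinal by the finite Ramsey theorem for pairs, which suffices since $\varepsilon$-stability is a finitary property. I would argue the contrapositive: if $\sigma(\bar x\,;z\,;X)$ is $\varepsilon$-unstable, then so is $\varphi(x\,;z\,;X)$.

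The first observation is syntactic. Since pseudonegation swaps $\wedge$ and $\vee$ while leaving atomic formulas of the form $\tau\in X$ unchanged, we have
\[
{\sim}\sigma(\bar x\,;z\,;X)\ \equiv\ \bigvee_{j<m}{\sim}\varphi(x_j\,;z\,;X).
\]
Now fix any target length $K<\omega$ and let $M$ be so large that every $m$-coloring of $[M]^2$ admits a monochromatic subset of size $K$ (finite Ramsey for pairs with $m$ colors). If $\sigma$ is $\varepsilon$-unstable, choose a witnessing sequence $\langle\bar a_i\,;b_i\,;C_i\,;\tilde C_i\ :\ i<M\rangle$, writing $\bar a_i=\langle a_{i,0},\dots,a_{i,m-1}\rangle$. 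For every pair $i<n<M$, the hypothesis ${\sim}\sigma(\bar a_i\,;b_n\,;\tilde C_n)$ gives by the displayed equivalence some $j(i,n)<m$ with ${\sim}\varphi(a_{i,j(i,n)}\,;b_n\,;\tilde C_n)$. This assignment is an $m$-coloring of $[M]^2$.

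By the choice of $M$, there is a subset $I\subseteq M$ of size $K$ on which $j(i,n)=j$ is constant. Relabeling $I$ as $\{0,\dots,K-1\}$ and setting $a'_i=a_{i,j}$, I claim the sequence $\langle a'_i\,;b_i\,;C_i\,;\tilde C_i\ :\ i<K\rangle$ witnesses $\varepsilon$-instability of $\varphi$. Indeed, for $i<n$ in $I$ we have $\varphi(a'_n\,;b_i\,;C_i)$ as a conjunct of $\sigma(\bar a_n\,;b_i\,;C_i)$, and ${\sim}\varphi(a'_i\,;b_n\,;\tilde C_n)$ by the monochromaticity, while the disjointness $C_i^\varepsilon\cap\tilde C_i=\varnothing$ is inherited from the original sequence.

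Since $K$ was arbitrary, $\varphi(x\,;z\,;X)$ is $\varepsilon$-unstable, contradicting the hypothesis. I do not expect any substantial obstacle: the passage through pseudonegation is clean because $\sigma$ is a conjunction of formulas in $\varphi$, and the Ramsey step is just the standard pigeonhole argument for colored pairs, applied with enough room to reach any prescribed finite length. The one thing to be slightly careful about is that the color $j(i,n)$ depends on $\bar a_i$ only through its $j$-th coordinate, which is what makes the monochromatic subsequence collapse cleanly to a witness for $\varphi$.
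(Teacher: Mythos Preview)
Your proof is correct and follows essentially the same route as the paper's. Both argue the contrapositive via finite Ramsey for pairs with $m$ colors: pick a long enough $\varepsilon$-instability witness for $\sigma$, color each pair $i<n$ by the index $j$ of the disjunct realizing ${\sim}\sigma(\bar a_i\,;b_n\,;\tilde C_n)$, and extract a monochromatic subsequence that witnesses $\varepsilon$-instability of $\varphi$. The only cosmetic difference is that the paper fixes $m$ as the maximal length of a bad sequence for $\varphi$ (which in the previous lemma also happens to be the number of conjuncts in $\sigma$) and derives a contradiction at that single value, whereas you quantify over all target lengths $K$; the content is identical.
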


\begin{proof}
  Let $m$ be maximal such that a sequence as in Definition~\ref{def_epsilon_stable} exists for $\varphi(x\,;z\,;X)$.
  Let $k$ be sufficiently large so that every $m$-coloring of a graph of size $k$ has a monocromatic subgraph of size $>m$.
  Let $\langle \bar a_i\,;b_i\,;C_i\,;\tilde C_i\ :\ i<k\rangle$ be a sequence witnessing the $\varepsilon$-instability of $\sigma(\bar x\,;z\,;X)$.
  Then for every pair $i<n$ there is some $j<m$ such that ${\sim}\varphi(a_{i,j}\,;b_n\,;\tilde C_n)$.
  By the choice of $k$ there is a subsequence of length $>m$ and a fixed $j<m$ such that ${\sim}\varphi(a_{i,j}\,;b_n\,;\tilde C_n)$ holds for every pair $i<n$ in the subsequence. This contradicts the maximality of $m$.
\end{proof}

\section{Stable definable functions}
\def\medrel#1{\parbox{5ex}{\hfil $#1$}}
\def\ceq#1#2#3{\parbox[t]{28ex}{$\displaystyle #1$}\medrel{#2}{$\displaystyle #3$}}

In this section we specialize the notions introduced in the previous sections to types $\sigma(x\,;z\,;X)$ of the form $\tau(x\,;z)\in X$, where $\tau(x\,;z)$ is an $S$-valued ${\EuScript F}$-type-definable function.

We say that $\tau(x\,;z)$ is \emph{stable\/} or \emph{$\varepsilon$-stable\/} if so is the type $\tau(x\,;z)\in X$.
The following fact is an interesting characterization of stability for functions which was first remarked in~\cite{B}.

\begin{fact}\label{fact_grothendieck}
  Let $S$ be a compact metric space.
  Let $\tau(x\,;z)$ be a term in ${\EuScript L}$.
  Then the following are equivalent
  \begin{itemize}
    \item [1.] the formula $\tau(x\,;z)\in X$ is unstable
    \item [2.] there is a sequence $\langle a_i\,;b_i\ :\ i<\omega\rangle$ such that the two limits below exist and\smallskip
    
    \noindent\kern-\labelwidth\kern-\labelsep
    \ceq{\hfill \lim_{i\to\infty}\lim_{j\to\infty}\tau(a_i\,;b_j)}{\neq}{\lim_{j\to\infty}\lim_{i\to\infty}\tau(a_i\,;b_j).}
  \end{itemize}
\end{fact}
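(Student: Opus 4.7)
The plan is to deduce both implications from sequential compactness and metrizability of $S$. Since $\tau(x\,;z)\in X$ is a formula, by the remark after Definition~\ref{def_stable} we may take $\omega$-length witnesses in Definition~\ref{def_stable}; and since the pseudonegation of the atomic formula $\tau\in X$ equals $\tau\in X$ itself, instability of $\tau(x\,;z)\in X$ amounts to the existence of $\langle a_i\,;b_i\ :\ i<\omega\rangle$ and disjoint compact $C,\tilde C\subseteq S$ with $\tau(a_n\,;b_i)\in C$ and $\tau(a_i\,;b_n)\in\tilde C$ for all $i<n$ (up to swapping $C$ and $\tilde C$, which corresponds to reversing the order).

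For 2$\Rightarrow$1, I will set $\alpha=\lim_i\lim_j\tau(a_i\,;b_j)$ and $\beta=\lim_j\lim_i\tau(a_i\,;b_j)$ and pick disjoint compact neighborhoods $\tilde C\ni\alpha$, $C\ni\beta$ in the metric space $S$. I will then build interlaced strictly increasing indices $i_1<j_1<i_2<j_2<\dots$ inductively: at each step the iterated limit assumptions yield thresholds past which $\tau(a_{i_k}\,;b_j)\in\tilde C$ for all previously chosen $a_{i_k}$'s, respectively $\tau(a_i\,;b_{j_k})\in C$ for all previously chosen $b_{j_k}$'s; I take the new index past all such thresholds. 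Setting $a'_k=a_{i_k}$ and $b'_k=b_{j_k}$ then realises the instability pattern, because the interlacing guarantees $j_i<i_n$ and $i_i<j_n$ whenever $i<n$.

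For 1$\Rightarrow$2, I start from an instability witness and view $\tau(a_i\,;b_j)$ as a doubly-indexed sequence in the sequentially compact metric space $S$. A four-stage diagonal extraction does the job: first shrink the $j$-sequence to an infinite $J_0$ so that $\phi(i)=\lim_{j\in J_0}\tau(a_i\,;b_j)$ exists for every $i$; then shrink the $i$-sequence to an infinite $I$ so that $\psi(j)=\lim_{i\in I}\tau(a_i\,;b_j)$ exists for every $j\in J_0$; finally shrink $I$ to $I'$ and $J_0$ to $J$ so that $\lim_{i\in I'}\phi(i)$ and $\lim_{j\in J}\psi(j)$ exist. The instability pattern forces $\phi(i)\in\tilde C$ and $\psi(j)\in C$ (both closed), so the two iterated limits lie in disjoint sets and must differ; re-enumerating along $I'$ and $J$ produces the required single sequence. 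The main technical obstacle is precisely this repeated diagonal extraction, where each round must preserve the limits secured in the previous rounds; once that is arranged, disjointness of $C$ and $\tilde C$ instantly yields the inequality of the iterated limits.
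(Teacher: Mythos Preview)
Your proposal is correct and follows the same overall strategy as the paper: in each direction you use disjoint compact neighborhoods of the two iterated limits together with sequential compactness of the metric space $S$ to pass between the ladder pattern of instability and the double-limit inequality.

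Your argument is in fact more careful than the paper's in two places. For 2$\Rightarrow$1 the paper simply asserts that ``a tail of the sequence'' witnesses instability; this is imprecise, since the threshold on $j$ for $\tau(a_i;b_j)$ to enter the chosen neighborhood depends on $i$ (and dually), so a literal tail need not exhibit the full ladder. Your interlaced construction $i_1<j_1<i_2<\dots$ is the standard way to make this rigorous. For 1$\Rightarrow$2 the paper just writes ``we can find a subsequence such that the two limits exist''; your four-stage diagonal extraction spells out exactly how to do this, and your observation that $\phi(i)\in\tilde C$ and $\psi(j)\in C$ by closedness is the key point that the paper leaves implicit. The only thing to watch in your write-up is that after extracting separate index sets $I'$ and $J$ you must check that the iterated limits of the re-enumerated single sequence $\langle a_{i_k};b_{j_k}\rangle$ coincide with $\lim_{i\in I'}\phi(i)$ and $\lim_{j\in J}\psi(j)$; this holds because $J\subseteq J_0$ and $I'\subseteq I$, so the inner limits are unchanged.
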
  
  Terms are a specific kind of definable functions.
  When $\tau(x\,;z)$ is a term, we have that ${\sim}\big(\tau(x\,;z)\in X\big)=\tau(x\,;z)\in X$, which is used throughout the proof below.
  We suspect that Fact~\ref{fact_grothendieck} holds more generally when $\tau(x\,;z)$ is an $S$-valued ${\EuScript F}$-type-definable function, but we have not been able to prove this.

\begin{proof}
  1$\Rightarrow$2.
  Let $C\cap\tilde C=\varnothing$ and $\langle a_i\,;b_i\ :\ i<\omega\rangle$ be as given by (1).
  That is, $\tau(a_i\,;b_j)\in C$ and $\tau(a_j\,;b_i)\in \tilde C$ hold for every $i<j<\omega$.
  We can find a subsequence such that the two limits exist; $C$ contains the limit on the left; and $\tilde C$ contains the limit on the right~--~which therefore are distinct.

  2$\Rightarrow$1. Let $C$ and $\tilde C$ be disjoint neighborhoods of the two limits in (2).
  Then (1) is witnessed by a tail of the sequence $\langle a_i\,;b_i\ :\ i<\omega\rangle$.
\end{proof}

Let $f:{\EuScript U}^{z}\to S$ be a function.
We define
{\def\medrel#1{\parbox{5ex}{\hfil $#1$}}
\def\ceq#1#2#3{\parbox[t]{15ex}{$\displaystyle #1$}\medrel{#2}{$\displaystyle #3$}}

\ceq{\hfill{\EuScript D\!}_f}{=}{\big\{\langle b,C\rangle\ \ :\ \ f(b)\in C,\ \ b\in{\EuScript U}^{z},\ \ C\in K(S)\big\}}.}

The following fact is immediate.

\begin{fact}
  The following are equivalent
  \begin{itemize}
    \item [1.] ${\EuScript D\!}_f$ is approximable by $\tau(x\,;z)\in X$
    \item [2.] for every small $B\subseteq{\EuScript U}^z$ there is an $a\in{\EuScript U}^x$ such that $\tau(a\,;b)=f(b)$ for every $b\in B$.\smallskip
  \end{itemize}
\end{fact}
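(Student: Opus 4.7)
The plan is to unfold Definition~\ref{def_approx_X} in the specific case ${\EuScript D}={\EuScript D\!}_f$ and $\sigma(x\,;z\,;X)=\bigl(\tau(x\,;z)\in X\bigr)$. Since $\langle b,C\rangle\in{\EuScript D\!}_f$ means exactly $f(b)\in C$, and $\sigma(a\,;b\,;C)$ means exactly $\tau(a\,;b)\in C$, conditions (1) and (2) of that definition combine to say that, for every $b\in B$ and every $C\in K(S)$,
\[
f(b)\in C\ \Longleftrightarrow\ \tau(a\,;b)\in C.
\]

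I would first dispose of the direction 2$\Rightarrow$1, which is immediate: if $a$ satisfies $\tau(a\,;b)=f(b)$ for every $b\in B$, then the displayed biconditional holds trivially for every compact $C$, so the same $a$ witnesses approximability of ${\EuScript D\!}_f$ by $\tau(x\,;z)\in X$.

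For 1$\Rightarrow$2, I would use the fact that $S$ is Hausdorff and therefore every singleton $\{\alpha\}\subseteq S$ belongs to $K(S)$. Given a small $B\subseteq{\EuScript U}^z$ and an $a\in{\EuScript U}^x$ provided by approximability, fix $b\in B$ and plug $C=\{f(b)\}$ into the biconditional above: since $f(b)\in\{f(b)\}$ holds by construction, we obtain $\tau(a\,;b)\in\{f(b)\}$, i.e.\ $\tau(a\,;b)=f(b)$, which is exactly (2).

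There is no real obstacle here: the fact is essentially a direct unpacking of Definition~\ref{def_approx_X} in the present setting, the only substantive ingredient being the compactness of singletons in the Hausdorff space $S$, which is assumed from the very first paragraph of Section~1.
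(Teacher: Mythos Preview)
Your argument is correct and is exactly the unfolding the paper has in mind; the paper itself simply declares the fact ``immediate'' and gives no proof. The only nontrivial step is the one you isolate, namely that singletons are compact in the Hausdorff space $S$, so that $C=\{f(b)\}$ is an admissible choice.
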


We say that $f$ is \emph{approximable\/} by $\tau(x\,;z)$ if the equivalent conditions above hold.
The following is an easy consequence of Theorem~\ref{thm_stability_definability} and Theorem~\ref{thm_epsilon_stability_definability}.

\def\medrel#1{\parbox{5ex}{\hfil $#1$}}
\def\ceq#1#2#3{\parbox[t]{38ex}{$\displaystyle #1$}\medrel{#2}{$\displaystyle #3$}}

\begin{theorem}
  Let $S=[0,1]$.
  Let $\tau(x\,;z)$ be a term and let $f$ be approximable by $\tau(x\,;z)$.\smallskip
  
  1. \ If $\tau(x\,;z)$ is stable, then there are $\lambda$ and $\langle a_{i,j}\ :\ i,j<\lambda\rangle$ such that for every $b\in{\EuScript U}^z$\smallskip

  \ceq{\hfill \sup_{i<\lambda}\ \mathop{\inf\vphantom{p}}_{j<\lambda}\ \tau(a_{i,j}\,;b)}{=}{f(b).}\smallskip

  2.  If $\tau(x\,;z)$ is $\varepsilon$-stable there are $m$ and $\langle a_{i,j}\ :\ i,j<m\rangle$ such that for every $b\in{\EuScript U}^z$\smallskip

  \ceq{\hfill\Big|f(b)\ -\ \max_{i<m}\ \min_{j<m}\ \tau(a_{i,j}\,;b)\Big|}{\le}{\varepsilon.}
\end{theorem}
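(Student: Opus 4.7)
The plan is to derive both parts directly from the definability theorems already established, namely Theorem~\ref{thm_stability_definability} for part~(1) and Theorem~\ref{thm_epsilon_stability_definability} for part~(2), applied to $\sigma(x\,;z\,;X)=\tau(x\,;z)\in X$ and to the set ${\EuScript D\!}_f$. The key observation is that for $S=[0,1]$ the compact sets $C=[t,1]$ convert a membership statement $\alpha\in C$ into the inequality $\alpha\ge t$, and the $\varepsilon$-fattening $C^{\varepsilon}$ corresponds to the threshold shift $t\mapsto t-\varepsilon$. Both statements then amount to a threshold-by-threshold comparison between $f(b)$ and the associated sup--inf expression.

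For part~(1), the hypothesis that $f$ is approximable by $\tau(x\,;z)$ is, by the Fact immediately preceding the theorem, equivalent to ${\EuScript D\!}_f$ being approximable by $\sigma$ in the sense of Definition~\ref{def_approx_X}. Since $\tau(x\,;z)$ is stable by definition, so is $\sigma$, and Theorem~\ref{thm_stability_definability} produces an array $\langle a_{i,j}:i,j<\lambda\rangle$ such that, for every $b\in{\EuScript U}^z$ and every compact $C\subseteq[0,1]$,
\[
f(b)\in C\ \Longleftrightarrow\ \bigvee_{i<\lambda}\bigwedge_{j<\lambda}\tau(a_{i,j}\,;b)\in C.
\]
Specialising to $C=[t,1]$ and writing $g(b)=\sup_{i<\lambda}\inf_{j<\lambda}\tau(a_{i,j}\,;b)$ gives the equivalence $f(b)\ge t\Leftrightarrow g(b)\ge t$ for every $t\in[0,1]$. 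Taking $t=f(b)$ yields $g(b)\ge f(b)$, and taking $t=g(b)$ yields $f(b)\ge g(b)$; hence $g(b)=f(b)$.

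For part~(2), the same setup applied to Theorem~\ref{thm_epsilon_stability_definability} delivers finite $k,m$ and $\langle a_{i,j}:i<k,\ j<m\rangle$ satisfying, for every $b$ and every $C$,
\[
f(b)\in C\ \Rightarrow\ \bigvee_{i<k}\bigwedge_{j<m}\tau(a_{i,j}\,;b)\in C^{\varepsilon},\qquad f(b)\in C^{\varepsilon}\ \Leftarrow\ \bigvee_{i<k}\bigwedge_{j<m}\tau(a_{i,j}\,;b)\in C.
\]
After padding the array to square shape of size $\max(k,m)$, let $g(b)=\max_{i<m}\min_{j<m}\tau(a_{i,j}\,;b)$ and specialise to $C=[t,1]$, so that $C^{\varepsilon}=[t-\varepsilon,1]\cap[0,1]$. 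The two implications rewrite as $f(b)\ge t\Rightarrow g(b)\ge t-\varepsilon$ and $g(b)\ge t\Rightarrow f(b)\ge t-\varepsilon$, valid for all $t$. Choosing $t=f(b)$ in the first and $t=g(b)$ in the second produces $g(b)\ge f(b)-\varepsilon$ and $f(b)\ge g(b)-\varepsilon$, i.e.\ $|f(b)-g(b)|\le\varepsilon$.

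The only non-routine point is that the arrays supplied by the two cited theorems are uniform in $C$: a single $\langle a_{i,j}\rangle$ witnesses the equivalence (or its one-sided $\varepsilon$-approximate version) for all compact $C$ simultaneously. This uniformity is exactly what allows sweeping through all thresholds $t\in[0,1]$ and converting pointwise membership statements into the global sup--inf identity; everything else is a routine transfer between the compact-set formulation on $[0,1]$ and its linear order.
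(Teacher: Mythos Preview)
Your approach is correct and uses a different specialization than the paper's: you restrict to half-intervals $C=[t,1]$ and argue threshold by threshold, whereas the paper exploits the full range of compact $C$'s allowed by Theorems~\ref{thm_stability_definability} and~\ref{thm_epsilon_stability_definability}. For part~(1) the paper plugs in the singleton $C=\{f(b)\}$, which immediately produces an index $k$ with $\tau(a_{k,j};b)=f(b)$ for every $j$, and then plugs in $C_i=\overline{\{\tau(a_{i,j};b):j<\lambda\}}$ to obtain $f(b)\in C_i$ for every $i$. For part~(2) it argues analogously, deriving from suitably chosen finite $C$'s a two-sided estimate $|\tau(a_{i,j};b)-f(b)|\le\varepsilon$ (for some $i$ and all $j$, and for all $i$ and some $j$). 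Your threshold argument is the natural real-valued-logic translation and is arguably more transparent; the paper's use of singletons is sharper and sidesteps any limit reasoning.

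There is one small slip in your part~(1). From Theorem~\ref{thm_stability_definability} you correctly obtain, for every $t$, the equivalence $f(b)\ge t\Leftrightarrow\exists i\,\forall j\,:\,\tau(a_{i,j};b)\ge t$. But the right-hand side is only equivalent to $g(b)\ge t$ when the supremum $\sup_{i<\lambda}\inf_{j<\lambda}$ is attained, which need not happen for infinite $\lambda$. Your inference $g(b)\ge f(b)$ from $t=f(b)$ is fine; the step ``taking $t=g(b)$ yields $f(b)\ge g(b)$'' uses precisely the unjustified direction. The repair is immediate: for any $t>f(b)$ the equivalence gives $\forall i:\inf_j\tau(a_{i,j};b)<t$, hence $g(b)\le t$, and letting $t\downarrow f(b)$ yields $g(b)\le f(b)$. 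In part~(2) the issue does not arise, since $\max$ and $\min$ over finite index sets are attained; that argument is clean as written.
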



From the proof it is clear that the matrix $a_{i,j}$ is such that $\sup_i \inf_j$ can be replaced by $\inf_i \sup_j$.
Similarly, $\max_i \min_j$ can be replaced by $\min_i \max_j$.

\begin{proof}
  We prove the first claim.
  Let $\lambda$ and $\langle a_{i,j}\ :\ i,j<\lambda\rangle$ be as in Theorem~\ref{thm_stability_definability}.
  Let $C_i$ be the closure of the set $\{\tau(a_{i,j}\,;b)\,:\,j<\lambda\}$.
  By ($\Leftarrow$) in Theorem~\ref{thm_stability_definability}, $f(b)\in C_i$ for every $i<\lambda$.
  Then $f(b) \ge \inf_j \tau(a_{i,j}\,;b)$ for every $i<\lambda$.
  Therefore $f(b) \ge \sup_i\,\inf_j \tau(a_{i,j}\,;b)$.
 
  Now, by ($\Rightarrow$) in Theorem~\ref{thm_stability_definability} there is $k<\lambda$ such that $f(b) = \tau(a_{k,j}\,;b)$ for every $j<\lambda$.
  Therefore $f(b) = \inf_j \tau(a_{k,j}\,b)$.
  Then $f(b)\le \sup_i\inf_j \tau(a_{i,j}\,b)$.
  
  We prove the second claim.
  Let $m$ and $\langle a_{i,j}\ :\ i,j<m\rangle$ be as in Theorem~\ref{thm_epsilon_stability_definability} (without loss of generality $k=m$).
  Then Theorem~\ref{thm_epsilon_stability_definability} gives

  \ceq{3.\hfill\bigvee_{i< m}\ \bigwedge_{j<m}\ \big|\tau(a_{i,j}\,;b)-f(b)\big|\le\varepsilon.}{ }{}

  We claim that we also have

  \ceq{4.\hfill\bigwedge_{i< m}\ \bigvee_{j<m}\ \big|\tau(a_{i,j}\,;b)-f(b)\big|\ \le\ \varepsilon.}{ }{}

  For $i<m$, let $C_i=\{\tau(a_{i,j}\,;b):j<m\}$.
  Then 

  \ceq{\hfill\bigwedge_{j<m}\ \tau(a_{i,j}\,;b)\in C_i,}{ }{}

  hence by Theorem~\ref{thm_epsilon_stability_definability} (2) there is $k$ such that
  
  \ceq{\hfill\big|\tau(a_{i,k}\,;b)-f(b)\big|}{\le}{\varepsilon.}

  Then (4) follows.

  Now, suppose (2) of the theorem fails for the given $\varepsilon$.
  Then for some $b\in{\EuScript U}^z$ one of the following obtains

  \ceq{5.\hfill f(b)\ -\ \min_{j<m}\ \tau(a_{i,j}\,;b)}{>}{\varepsilon}\hfill for every $i<m$\phantom{.}

  \ceq{6.\hfill \min_{j<m}\ \tau(a_{i,j}\,;b)\ -\ f(b)}{>}{\varepsilon}\hfill for some $i<m$.

But (5) contradicts (3), while (6) contradicts (4).
\end{proof}

\vskip10ex


\BibSpec{arXiv}{%
  +{}{\PrintAuthors}{author}
  +{,}{ \textit}{title}
  +{}{ \parenthesize}{date}
  +{,}{ arXiv:}{eprint}
  +{,}{ } {note}
}

\BibSpec{webpage}{%
  +{}{\PrintAuthors} {author}
  +{,}{ \textit} {title}
  +{,}{ } {portal}
  +{}{ \parenthesize} {date}
  +{,}{ } {doi}
  +{,}{ } {note}
  +{.}{ } {transition}
}
\begin{bibdiv}
\begin{biblist}[]\normalsize

\bib{clcl}{article}{
    label={ABBMZ},
    author = {Agostini, Claudio},
    author = {Baratella, Stefano},
    author = {Barbina, Silvia},
    author = {Motto Ros, Luca},
    author = {Zambella, Domenico},
    journal={Bull. Iran. Math. Soc.},
    volume={51},
    date = {2025},
    title = {Continuous logic in a classical setting},
    note={\href{https://arxiv.org/abs/2402.01245}{arXiv:2402.01245}},
  }\smallskip
  \bib{B}{article}{
    label={B},
    author={Ben Yaacov, Ita\"{\i}},
    title={Model theoretic stability and definability of types, after A.
    Grothendieck},
    journal={Bull. Symb. Log.},
    volume={20},
    date={2014},
    number={4},
    pages={491--496},
    note={\href{https://arxiv.org/abs/1306.5852}{arXiv:1306.5852}},
 }\smallskip
\bib{BBHU}{article}{
  label={BBHU},
  author={Ben Yaacov, Ita\"{\i}},
  author={Berenstein, Alexander},
  author={Henson, C. Ward},
  author={Usvyatsov, Alexander},
  title={Model theory for metric structures},
  conference={
      title={Model theory with applications to algebra and analysis. Vol. 2},
  },
  book={
      series={London Math. Soc. Lecture Note Ser.},
      volume={350},
      publisher={Cambridge Univ. Press, Cambridge},
  },
  date={2008},
  pages={315--427},
}\smallskip
\bib{BU}{article}{
   label={BU},
   author={Ben Yaacov, Ita\"i},
   author={Usvyatsov, Alexander},
   title={Continuous first order logic and local stability},
   journal={Trans. Amer. Math. Soc.},
   volume={362},
   date={2010},
   pages={5213--5259},
  note={\href{https://arxiv.org/abs/0801.4303}{arXiv:0801.4303}},
}\smallskip
\bib{CP}{article}{
    label={CP},
   author={Chavarria, Nicolas},
   author={Pillay, Anand},
   title={On pp-elimination and stability in a continuous setting},
   journal={Ann. Pure Appl. Logic},
   volume={174},
   date={2023},
   pages={1--14},
  note={\href{https://arxiv.org/abs/2107.14329}{arXiv:2107.14329}},
}\smallskip
\bib{CS}{article}{
  label={CS},
  author={Chernikov, Artem},
  author={Simon, Pierre},
  title={Externally definable sets and dependent pairs},
  journal={Israel J. Math.},
  volume={194},
  date={2013},
  number={1},
  pages={409--425},
  note={\href{https://arxiv.org/abs/1007.4468}{arXiv:1007.4468}},
}\smallskip




\bib{HI}{article}{
  label={HI},
  author={Henson, C. Ward},
  author={Iovino, Jos\'{e}},
  title={Ultraproducts in analysis},
  conference={
    title={Analysis and logic},
    address={Mons},
    date={1997},
   },
   book={
      series={London Math. Soc. Lecture Note Ser.},
      volume={262},
      publisher={Cambridge Univ. Press, Cambridge},
   },
   date={2002},
   pages={1--110},
}\smallskip

\bib{TZ}{book}{
  label={TZ},
  author={Tent, Katrin},
  author={Ziegler, Martin},
  title={A course in model theory},
  series={Lecture Notes in Logic},
  volume={40},
  publisher={Association for Symbolic Logic, La Jolla, CA; Cambridge
  University Press, Cambridge},
  date={2012},
  pages={x+248},
}
\bib{Z15}{article}{
   label={Z15},
   author={Zambella, Domenico},
   title={Elementary classes of finite VC-dimension},
   journal={Arch. Math. Logic},
   date={2015},
   pages={511--520},
   note={\href{https://arxiv.org/abs/1412.5781}{arXiv:1412.5781}},
}\smallskip
\bib{Z}{article}{
  label={Z},
  author = {Zambella, Domenico},
  title = {Standard analysis},
  note={\href{https://arxiv.org/abs/2311.15711}{arXiv:2311.15711}},
  date = {2023},
}\smallskip
\bib{Z?}{article}{
  label={Z?},
  author = {Zambella, Domenico},
  title = {Crèche course in model theory},
  note={\href{https://github.com/domenicozambella/creche}{Course notes}},
  date = {2025},
}\smallskip
\end{biblist}
\end{bibdiv}
\end{document}